\newtheorem {theorem}{Theorem}[section]
\newtheorem {proposition}[theorem]{Proposition}
\newtheorem {corollary}[theorem]{Corollary}
\newtheorem {definition}[theorem]{Definition}
\newtheorem {question}[theorem]{Question}
\theoremstyle{remark}
\newtheorem {remark}[theorem]{Remark}
\DeclareFontFamily{U}{mathx}{\hyphenchar\font45}
\DeclareFontShape{U}{mathx}{m}{n}{
      <5> <6> <7> <8> <9> <10>
      <10.95> <12> <14.4> <17.28> <20.74> <24.88>
      mathx10
      }{}
\DeclareSymbolFont{mathx}{U}{mathx}{m}{n}
\DeclareMathAccent{\widecheck}{0}{mathx}{"71}
\def\zz {{\mathbb{Z}}}
\def\rr {{\mathbb{R}}}
\def\R {\rr}
\def\cc {{\mathbb{C}}}
\def\qq {{\mathbb{Q}}}
\def\L{\mathscr{L}}
\def\C{\mathcal{C}}
\def\L {\mathcal{L}}
\def\G {\mathcal{G}}
\def\Go {\G^{\circ}}
\def\Spin{\mathbb{S}}
\def\is{\mathscr{S}}
\DeclareMathOperator{\inte}{\operatorname{int}}
\DeclareMathOperator{\rk}{\operatorname{rk}}
\def\spc{\operatorname{Spin}^c}
\def\Inv{\operatorname{Inv}}
\def\SWF{\operatorname{SWF}}
\def\cmto{\widecheck{\mathit{CM}}}
\def\hmto{\widecheck{\mathit{HM}}}
\def\hmtilde{\widetilde{\mathit{HM}}}
\def\HFhat{\widehat{\mathit{HF}}}
\def\HFminus{\mathit{HF}^-}
\def\HFplus{\mathit{HF}^+}
\def\HFred{\mathit{HF}_{\operatorname{red}}}
\def\tH{\widetilde{H}}
\def\swf{\SWF}
\def\spinc{\mathfrak{s}}
\def\s{\spinc}
\def\tY{\widetilde{Y}}
\def\pml{p^\lambda}
\def\vml{W^\lambda}
\def\fpq{p/q}
\def\fpqprime{p'/q'}
\def\fqp{q/p}
\def\fqpprime{q'/p'}
\def\HFred{HF_{\textup{red}}}
\def\Z{\mathbb{Z}}
\begin{document}

\title{Floer homology and covering spaces}

\author[Tye Lidman]{Tye Lidman}
\thanks{The first author was partially supported by NSF grants DMS-1128155 and DMS-1148490.}
\address {Department of Mathematics, North Carolina State University, 2311 Stinson Drive \\  Raleigh, NC 27695}
\email {tlid@math.ncsu.edu}

\author[Ciprian Manolescu]{Ciprian Manolescu}
\thanks {The second author was partially supported by NSF grants DMS-1104406 and DMS-1402914.}
\address {Department of Mathematics, UCLA, 520 Portola Plaza\\ Los Angeles, CA 90095}
\email {cm@math.ucla.edu}

\begin{abstract}
We prove a Smith-type inequality for regular covering spaces in monopole Floer homology. Using the monopole Floer / Heegaard Floer correspondence, we deduce that if a $3$-manifold $Y$ admits a $p^n$-sheeted regular cover that is a $\zz/p\zz$-$L$-space (for $p$ prime), then $Y$ is a $\zz/p\zz$-$L$-space. Further, we obtain constraints on surgeries on a knot being regular covers over other surgeries on the same knot, and over surgeries on other knots.
\end {abstract}

\maketitle

\section {Introduction}
Monopole Floer homology \cite{KMbook} and Heegaard Floer homology \cite{HolDisk, HolDiskTwo} are two leading theories used to study three-dimensional manifolds. Recently, the two theories have been shown to be isomorphic, by work of Kutluhan-Lee-Taubes \cite{KLT1, KLT2, KLT3, KLT4, KLT5} and of Colin-Ghiggini-Honda \cite{CGH1, CGH2, CGH3} and Taubes \cite{Taubes12345}. Although Floer homologies have found many applications, their interaction with many classical topological constructions is still not fully understood. The purpose of the present paper is to study their behavior with respect to regular coverings. Coverings play a fundamental role in three-dimensional topology, particularly in view of the recent proof of the virtually fibered conjecture \cite{Wise, Agol}. Although our results are limited to covers between rational homology spheres, we expect that some of the techniques will extend to more general covers between three-manifolds.

Our model is the following well-known inequality, due to P. Smith \cite{SmithInequality, Floyd, BredonBook}. Suppose that a group $G$ of order $p^n$ (where $p$ is prime) acts on a compact topological space $X$, with $H_*(X; \zz/p\zz)$ finite dimensional. Let $X^G$ denote the fixed point set. The mod $p$ Betti numbers of $X$ and $X^G$ are then related by:
 \begin {equation}
 \label {eq:smith}
 \sum_i \dim  H_i(X^G; \zz/p\zz) \leq \sum_i \dim H_i(X; \zz/p\zz).
 \end {equation}
Seidel and I. Smith \cite{SeidelSmithLoc} proved that an analogue of \eqref{eq:smith} holds for Lagrangian Floer homology, under certain conditions. Specifically, they only considered the case $G = \zz/2\zz$, and assumed that the Lagrangians admit a stable normal trivialization. Hendricks \cite{Hendricks} used their result in the context of Heegaard Floer theory to show that the knot Floer homology of a knot $K \subset S^3$ has rank at most as large as the knot Floer homology of $K$ inside the double branched cover $\Sigma(K)$. 

Another natural setting in which one can hope to apply the Seidel-Smith inequality is the Heegaard Floer homology of covers. If $\widetilde Y \to Y$ is a covering of closed $3$-manifolds, one can obtain a Heegaard diagram for $\widetilde Y$ from a Heegaard diagram for $Y$; see the work of Lee and Lipshitz \cite{LeeLipshitz}. If $\widetilde Y \to Y$ is a double cover and we take suitable symmetric products of the Heegaard surfaces, we end up almost in the setting of Seidel-Smith. However, a stable normal trivialization does not exist for the Lagrangians under consideration, and this approach runs into difficulties. An alternative approach would be to use monopole Floer homology, as defined by Kronheimer and Mrowka \cite{KMbook}, instead of Heegaard Floer homology. Difficult issues related to equivariant transversality arise in this setting as well.

Our strategy is to work with another version of monopole Floer homology. In \cite{Spectrum}, the second author defined an invariant of rational homology spheres, which takes the form of an equivariant suspension spectrum. Specifically, given a rational homology sphere $Y$ equipped with a $\spc$ structure $\s$, one can associate to it an $S^1$-equivariant spectrum $\SWF(Y, \s)$. The construction uses finite dimensional approximation of the Seiberg-Witten equations, and skirts transversality issues. This makes it possible to prove the following:

\begin{theorem}\label{thm:Smith}
Suppose that $\widetilde{Y}$ is a rational homology sphere, $Y$ is orientable, and $\pi:\widetilde{Y} \to Y$ is a $p^n$-sheeted regular covering, for $p$ prime. Let $\spinc$ be a $\spc$ structure on $Y$. Then, the following inequality holds: 
\begin{equation}\label{eq:swfsmith}
\sum_i \dim \widetilde H_i(\SWF(Y,\spinc); \mathbb{Z}/p\mathbb{Z}) \leq \sum_i \dim \widetilde H_i(\SWF(\widetilde{Y},\pi^*\spinc); \mathbb{Z}/p\mathbb{Z}).
\end{equation}     
\end{theorem}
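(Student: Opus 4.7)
The plan is to lift the classical Smith inequality \eqref{eq:smith} to Seiberg-Witten Floer stable homotopy by producing a $G$-equivariant refinement of $\SWF(\widetilde{Y}, \pi^*\spinc)$ whose $G$-fixed set is (stably equivalent to) $\SWF(Y, \spinc)$, where $G$ denotes the deck transformation group of the cover. Since the cover is regular of order $p^n$, the group $G$ has order $p^n$ and acts freely on $\widetilde Y$ with quotient $Y$. I would start by pulling back a Riemannian metric from $Y$ to obtain a $G$-invariant metric on $\widetilde Y$; at that point every ingredient of the Seiberg-Witten setup on $(\widetilde Y, \pi^*\spinc)$ --- the affine space of $\spc$ connections, the spinor bundle, the Coulomb slice, and the resulting Seiberg-Witten gradient flow --- acquires a $G$-action commuting with the residual $S^1$-action from constant gauge transformations.

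The next step is to rerun the finite-dimensional approximation construction of \cite{Spectrum} in a $G$-equivariant fashion. The relevant self-adjoint Fredholm operator, namely the linearization of the Seiberg-Witten equations in Coulomb gauge, is $G$-equivariant, so its finite-dimensional spectral subspaces $\vml$ can be chosen to be $(G\times S^1)$-invariant. Running the approximated flow on $\vml$ and taking the Conley index of the relevant isolated invariant set produces a pointed $(G\times S^1)$-space whose stable equivalence class promotes $\SWF(\widetilde Y, \pi^*\spinc)$ to a $G$-equivariant spectrum.

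The decisive identification is then that the $G$-fixed points are exactly the Seiberg-Witten data for $Y$: a configuration on $\widetilde Y$ invariant under all deck transformations descends uniquely to one on $Y$ realizing $\spinc$, and conversely every configuration on $Y$ pulls back $G$-invariantly. This correspondence respects the Coulomb slice, the $S^1$-action, and the flow, so the $G$-fixed subspace of $\vml$ agrees with the analogous spectral subspace on $Y$ and its Conley index represents $\SWF(Y, \spinc)$ up to the usual suspension normalization. Applying the classical Smith inequality of \cite{BredonBook} to the resulting $(G\times S^1)$-space and its fixed set, and then passing to reduced $\zz/p\zz$-homology of suitable stabilizations so that both sides become genuine invariants of $Y$ and $\widetilde Y$, yields \eqref{eq:swfsmith}.

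The principal obstacle will be the fixed-point identification at the spectrum level: one must verify that the spectral projections, the Conley index, and the suspension indices can all be chosen equivariantly, and that the fixed-point Conley index genuinely recovers the Conley index for $Y$ after accounting for the difference in dimension between $\vml$ and its $G$-fixed subspace $(\vml)^G$. The freeness of the deck action keeps the underlying geometry transparent, but the bookkeeping on the stable homotopy side --- ensuring that the inequality is independent of the choices of cutoff $\lambda$ and invariant spectral decomposition, and that the $\pin$-type subtleties for self-conjugate $\spc$ structures are handled uniformly --- is where the technical heart of the argument lies.
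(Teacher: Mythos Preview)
Your proposal is correct and follows essentially the same strategy as the paper: make the finite-dimensional approximation $(G\times S^1)$-equivariant by pulling back the metric and base connection, identify the $G$-fixed Conley index with the Conley index for $Y$, and apply the classical Smith inequality at that finite-dimensional level. The only differences are that the paper spells out a few of your anticipated bookkeeping points explicitly (the $|G|$-scaling of Sobolev norms under pullback, the compatible choice of radii $R_Y = R_{\widetilde Y}/|G|$, and the fact that the $G$-fixed set of an equivariant index pair is itself an index pair for the fixed flow), while the $\pin$-type subtleties you mention do not in fact enter the argument.
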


Observe that, under the assumptions of the theorem, $Y$ must be a rational homology sphere as well. Indeed, if $H^1(Y; \zz) \neq 0$ then there would exist a surjective homomorphism $\pi_1(Y) \to \zz$. Since $\pi_1(\widetilde{Y}) \subseteq \pi_1(Y)$ is a subgroup of finite index, the restriction of that homomorphism to $\pi_1(\widetilde{Y})$ would be nontrivial, which would contradict $b_1(\widetilde Y)=0$.

The proof of Theorem~\ref{thm:Smith} is not difficult. The Floer spectrum $\SWF$ is constructed from Conley indices in the finite dimensional approximations, and \eqref{eq:swfsmith} follows from an application of the classical Smith inequality \eqref{eq:smith} to these Conley indices.

Theorem~\ref{thm:Smith} becomes powerful in conjunction with our work from a previous paper \cite{Equivalence}. There, we proved that the monopole Floer homology of Kronheimer and Mrowka can be recovered from $\SWF(Y, \s)$. Specifically, if $Y$ is a rational homology sphere with a $\spc$ structure $\spinc$, we showed that there are isomorphisms
\begin{equation}
\label{eq:Equiv}
\hmto_*(Y,\spinc)  \cong \widetilde{H}^{S^1}_*(\SWF(Y,\spinc)), \ \ \ 
 \hmtilde_*(Y,\spinc) \cong \widetilde{H}_*(\SWF(Y,\spinc)).
 \end{equation}
Here, $\hmto$ is the ``to'' version of monopole Floer homology defined in \cite{KMbook}, $\hmtilde$ is the homology of the mapping cone of $U$ on the monopole Floer complex $\cmto$  (cf. \cite[Section 5.3]{Lee}, \cite[Section 8]{Bloom}), and $\tH^{S^1}_*$ denotes reduced equivariant (Borel) homology.

Thus, from Theorem~\ref{thm:Smith} we obtain the following inequality in monopole Floer homology: 
\begin{corollary}
\label{cor:tildeinequality}
Under the assumptions of Theorem~\ref{thm:Smith}, we have
\begin{equation}\label{eqn:hmsmith}
\dim \hmtilde(Y,\spinc;\mathbb{Z}/p\mathbb{Z}) \leq \dim \hmtilde(\widetilde{Y},\pi^*\spinc;\mathbb{Z}/p\mathbb{Z}).  
\end{equation}
\end{corollary}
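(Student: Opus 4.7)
The plan is to deduce the corollary directly from Theorem~\ref{thm:Smith} by transporting the inequality \eqref{eq:swfsmith} across the spectrum-level identification \eqref{eq:Equiv}. The second isomorphism in \eqref{eq:Equiv}, established in \cite{Equivalence}, identifies $\hmtilde_*(Y,\spinc)$ with the reduced (non-equivariant) homology of the Floer spectrum $\SWF(Y,\spinc)$. Although \eqref{eq:Equiv} is stated over the integers, the isomorphism is induced by a spectrum-level construction, so it remains valid after changing coefficients to $\mathbb{Z}/p\mathbb{Z}$.

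Concretely, first I would invoke the coefficient-changed version of \eqref{eq:Equiv} for the base manifold, giving
\begin{equation*}
\dim_{\mathbb{Z}/p\mathbb{Z}} \hmtilde_*(Y,\spinc;\mathbb{Z}/p\mathbb{Z}) \;=\; \sum_i \dim \widetilde H_i(\SWF(Y,\spinc); \mathbb{Z}/p\mathbb{Z}),
\end{equation*}
and analogously for the cover $\widetilde{Y}$ with the pulled-back \spc{} structure $\pi^*\spinc$. Substituting these two identities into the Smith-type inequality \eqref{eq:swfsmith} of Theorem~\ref{thm:Smith} immediately yields \eqref{eqn:hmsmith}.

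The only point that needs attention is verifying that the isomorphism \eqref{eq:Equiv} is compatible with mod $p$ reduction; this is where the fact that the equivalence in \cite{Equivalence} is realized on the level of spaces (more precisely, via maps of spectra computing $\hmtilde$ as the reduced homology of a suitable desuspension of $\SWF$) is essential. Once this compatibility is in hand, the argument is purely formal. I would expect no other obstacles, since Theorem~\ref{thm:Smith} already packages the input from classical Smith theory that makes the inequality possible, and the rational-homology-sphere hypothesis on $\widetilde Y$ (hence on $Y$, as noted after Theorem~\ref{thm:Smith}) ensures that both $\hmtilde$ groups are finite-dimensional over $\mathbb{Z}/p\mathbb{Z}$ so that \eqref{eqn:hmsmith} is a meaningful statement.
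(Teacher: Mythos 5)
Your proposal is correct and follows the same route as the paper: the corollary is obtained by combining the Smith-type inequality \eqref{eq:swfsmith} of Theorem~\ref{thm:Smith} with the identification $\hmtilde_*(Y,\spinc) \cong \widetilde{H}_*(\SWF(Y,\spinc))$ from \eqref{eq:Equiv}, valid with $\mathbb{Z}/p\mathbb{Z}$ coefficients. Your extra remark about compatibility of the equivalence with mod $p$ reduction is a reasonable point of care but does not change the argument.
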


Furthermore, by applying the work of Kutluhan-Lee-Taubes and of Colin-Ghiggini-Honda and Taubes on the monopole Floer / Heegaard Floer equivalence, we can rephrase Corollary~\ref{cor:tildeinequality} in terms of Heegaard Floer theory. Their results say that
\begin{equation}
\label{eq:Equiv2}
 \hmto(Y, \spinc) \cong \HFplus_*(Y, \spinc), \ \  \hmtilde(Y,\spinc) \cong \HFhat_*(Y, \spinc),
  \end{equation}
where $\HFplus$ and $\HFhat$ are two versions of Heegaard Floer homology from \cite{HolDisk}.  Thus, Corollary~\ref{cor:tildeinequality} turns into an inequality in Heegaard Floer homology.

\begin{corollary}
\label{cor:hatinequality}
Under the assumptions of Theorem~\ref{thm:Smith}, we have
\begin{equation}\label{eqn:hfsmith}
\dim \HFhat(Y,\spinc;\mathbb{Z}/p\mathbb{Z}) \leq \dim \HFhat(\widetilde{Y},\pi^*\spinc;\mathbb{Z}/p\mathbb{Z}).  
\end{equation}
\end{corollary}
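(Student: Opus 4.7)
The plan is to deduce Corollary~\ref{cor:hatinequality} directly from Corollary~\ref{cor:tildeinequality} by transporting the inequality across the monopole Floer / Heegaard Floer equivalence \eqref{eq:Equiv2}. Concretely, the first step is to invoke, for both $Y$ and $\widetilde{Y}$ with their respective $\spc$ structures $\spinc$ and $\pi^*\spinc$, the isomorphism $\hmtilde(\cdot,\cdot) \cong \HFhat_*(\cdot,\cdot)$ from the work of Kutluhan-Lee-Taubes and Colin-Ghiggini-Honda-Taubes. Then I substitute these isomorphisms into \eqref{eqn:hmsmith} to produce \eqref{eqn:hfsmith}.

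The one subtlety that needs attention is coefficients: the equivalence \eqref{eq:Equiv2} is typically stated with $\zz$ coefficients, whereas Corollary~\ref{cor:hatinequality} is phrased with $\zz/p\zz$ coefficients. The plan to handle this is to observe that the cited equivalence is realized by a chain-level (or at least sufficiently natural) correspondence, so it tensors compatibly with any coefficient ring; alternatively, one combines the integral isomorphism with the universal coefficient theorem applied to both sides to obtain the $\zz/p\zz$ version. Either route yields $\dim_{\zz/p\zz} \hmtilde(Y,\spinc;\zz/p\zz) = \dim_{\zz/p\zz} \HFhat(Y,\spinc;\zz/p\zz)$ and likewise for $\widetilde{Y}$.

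Given this identification of dimensions, the inequality \eqref{eqn:hfsmith} is immediate from \eqref{eqn:hmsmith}. The main (and essentially only) obstacle is therefore the coefficient-compatibility of the monopole / Heegaard equivalence; assuming this is in place, no further Floer-theoretic input is required beyond what has already been invoked in Corollary~\ref{cor:tildeinequality}.
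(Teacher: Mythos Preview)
Your proposal is correct and matches the paper's approach exactly: the paper does not even provide a separate proof of Corollary~\ref{cor:hatinequality}, simply stating that ``Corollary~\ref{cor:tildeinequality} turns into an inequality in Heegaard Floer homology'' via the equivalence \eqref{eq:Equiv2}. Your attention to the coefficient issue (handled via the universal coefficient theorem or chain-level naturality) is, if anything, slightly more careful than what the paper makes explicit.
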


One can adapt these arguments to obtain a similar result for $\HFred$, the reduced version of Heegaard Floer homology defined in \cite{HolDisk}:

\begin{theorem}
\label{thm:HFred}
Under the assumptions of Theorem~\ref{thm:Smith}, we have the inequality:
$$ \dim \HFred(Y, \s; \zz/p\zz) \leq  \dim \HFred(\widetilde{Y}, \pi^* \s; \zz/p\zz).$$
\end{theorem}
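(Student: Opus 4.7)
The plan is to identify $\HFred(Y,\s;\zz/p\zz)$ with the reduced Borel $S^1$-equivariant homology of the quotient spectrum $\SWF(Y,\s)/\SWF(Y,\s)^{S^1}$, and then apply the classical Smith inequality at the Borel level. Since $Y$ is a rational homology sphere with a unique reducible solution, the fixed subspectrum $\SWF(Y,\s)^{S^1}$ is, stably, a sphere with trivial $S^1$-action; its reduced Borel homology is accordingly a single free tower, and the inclusion injects this tower onto the tower summand of $\widetilde{H}^{S^1}_*(\SWF(Y,\s);\zz/p\zz)\cong \HFplus(Y,\s;\zz/p\zz)$ under the identification \eqref{eq:Equiv}. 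The long exact sequence in Borel homology for the cofiber sequence $\SWF(Y,\s)^{S^1}\to \SWF(Y,\s)\to \SWF(Y,\s)/\SWF(Y,\s)^{S^1}$ therefore degenerates to a short exact sequence, giving
\begin{equation*}
\widetilde{H}^{S^1}_*\bigl(\SWF(Y,\s)/\SWF(Y,\s)^{S^1};\zz/p\zz\bigr)\;\cong\; \HFred(Y,\s;\zz/p\zz),
\end{equation*}
and similarly for $\widetilde{Y}$.

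I then transfer to the cover. The deck group $G=\zz/p^n\zz$ acts on $\SWF(\widetilde{Y},\pi^*\s)$, commutes with the $S^1$-action, preserves $\SWF(\widetilde{Y},\pi^*\s)^{S^1}$, and has $G$-fixed subspectrum equivalent to $\SWF(Y,\s)$ by the same identification used in the proof of Theorem~\ref{thm:Smith}. A direct check on pointed spaces shows $\bigl(\SWF(\widetilde{Y},\pi^*\s)/\SWF(\widetilde{Y},\pi^*\s)^{S^1}\bigr)^G \simeq \SWF(Y,\s)/\SWF(Y,\s)^{S^1}$, since any non-basepoint class in the quotient that is $G$-fixed must already lie in $\SWF(\widetilde{Y},\pi^*\s)^G$. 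Because $S^1$ acts freely off the basepoint of the quotient, the reduced Borel space $ES^1_+\wedge_{S^1}\bigl(\SWF(\widetilde{Y},\pi^*\s)/\SWF(\widetilde{Y},\pi^*\s)^{S^1}\bigr)$ has finite-dimensional mod-$p$ homology, and its $G$-fixed set is the analogous Borel space for $Y$. Applying the classical Smith inequality \eqref{eq:smith} to this $G$-action and invoking the identification from the first paragraph yields
\begin{equation*}
\dim \HFred(Y,\s;\zz/p\zz)\;\leq\; \dim \HFred(\widetilde{Y},\pi^*\s;\zz/p\zz).
\end{equation*}

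The main obstacle is the identification in the first paragraph: verifying rigorously that $\SWF(Y,\s)^{S^1}$ is stably a sphere whose Borel homology realizes the tower summand of $\HFplus$. As in \cite{Spectrum, Equivalence}, this should be carried out at the level of the finite-dimensional approximations $V^\lambda$ of the Coulomb slice, where the $S^1$-fixed locus $V^{\lambda,S^1}$ is the reducible line, the restricted Seiberg--Witten flow is linear, and the Conley index of this restriction, after the appropriate desuspension, contributes exactly the tower to Borel homology. Compatibility with the deck action---namely that the $G$-fixed part of the approximation for $\widetilde{Y}$ is the approximation for $Y$, together with the analogous statement after restricting to the $S^1$-fixed locus---is already required for Theorem~\ref{thm:Smith}, so no new equivariant-transversality problem arises at this step.
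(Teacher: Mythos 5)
There is a genuine gap, and it sits exactly at the step you flag as the ``main obstacle'': the claimed identification $\tH^{S^1}_*\bigl(\SWF(Y,\s)/\SWF(Y,\s)^{S^1};\zz/p\zz\bigr)\cong \HFred(Y,\s;\zz/p\zz)$ is false in general. You are right that $\SWF(Y,\s)^{S^1}$ is (stably) $S^0$ and that its Borel homology is a single tower, and the image of $\tH^{S^1}_*(\SWF^{S^1})\to\tH^{S^1}_*(\SWF)$ is indeed the tower submodule of $\HFplus$ (the image is a divisible $\zz/p\zz[U]$-submodule, hence equals $\bigcap_N U^N\HFplus$). But the map is \emph{not} injective: its kernel is the bottom finite piece of the source tower, of dimension $\delta(Y,\s)\geq 0$ governed by where the tower of $\HFplus(Y,\s)$ begins relative to degree zero --- essentially the correction term $d(Y,\s)$; reading off $d$ from the spectrum is done precisely via this map. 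Equivalently, the connecting homomorphism in your cofiber sequence is nonzero whenever $d\neq 0$ (with the appropriate normalization), so the long exact sequence does not degenerate, and instead
$\dim\tH^{S^1}_*\bigl(\SWF/\SWF^{S^1}\bigr)=\dim\HFred(Y,\s)+\delta(Y,\s)$.
A concrete check: for the Poincar\'e sphere, $\HFred=0$ but the quotient spectrum has nontrivial Borel homology. Consequently your Smith-inequality step only yields
$\dim\HFred(Y,\s)+\delta(Y,\s)\leq \dim\HFred(\widetilde Y,\pi^*\s)+\delta(\widetilde Y,\pi^*\s)$,
which does not imply the theorem unless one separately controls $\delta(Y,\s)-\delta(\widetilde Y,\pi^*\s)$, i.e.\ compares $d$-invariants of $Y$ and $\widetilde Y$ --- a nontrivial statement that cannot be assumed here. (Your fixed-point identifications, e.g.\ $\bigl(\widetilde X/\widetilde X^{S^1}\bigr)^G\simeq X/X^{S^1}$ and the corresponding statement after the free Borel construction, are fine; a lesser issue is that the classical Smith inequality should be applied to compact spaces, so one should replace $ES^1$ by finite approximations.)

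The paper's proof avoids this by truncating the $U$-tower at a level $N$ that is the \emph{same} for $Y$ and $\widetilde Y$, rather than at the level dictated by each manifold's fixed-point spectrum. Concretely, it works with the Conley index $X$ and the compact space $X\wedge S(\cc^N)_+$, uses the exact sequence coming from the pair $\bigl(X\wedge S(\cc^N)_+,\,X\wedge D(\cc^N)_+\bigr)$ to identify the second map with multiplication by $U^N$ on $\tH^{S^1}_*(X)$, and computes
$\dim\tH_*\bigl(X\wedge_{S^1}S(\cc^N)_+\bigr)=N+2\dim\HFred(Y,\s)$
for $N\gg 0$, and likewise for $\widetilde X$. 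Since the same $N$ appears on both sides, the classical Smith inequality applied to the $G$-action on $\widetilde X\wedge S(\cc^N)_+$ (whose $G$-fixed set is $X\wedge S(\cc^N)_+$, the free $S^1$-action on $S(\cc^N)$ ruling out points fixed only up to rotation) gives exactly the desired inequality for $\HFred$, with no correction-term terms to control. If you want to rescue your approach, you would have to replace $\SWF^{S^1}$ by such a uniform truncation; as written, the argument proves a different (and weaker) inequality.
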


In \cite{GenusBounds}, Ozsv\'ath and Szab\'o defined a $\zz/p\zz$-$L$-space to be a rational homology sphere such that $\HFhat(Y; \zz/p\zz)$ has dimension $|H_1(Y;\mathbb{Z})|$ over $\zz/p\zz$ (for $p$ prime). Equivalently, $\HFhat(Y, \s; \zz/p\zz)$ should be one-dimensional for all $\spc$ structures $\s$. In terms of $\HFred$, this means that $\HFred(Y, \s; \zz/p\zz) = 0$ for all $\s$. Inequality~\eqref{eqn:hfsmith} (or, alternatively, Theorem~\ref{thm:HFred}) has the following immediate consequence:

\begin{corollary}
\label{cor:ZpL}
Suppose that $\pi:\widetilde{Y} \to Y$ is a regular $p^n$-sheeted covering of orientable $3$-manifolds, for $p$ prime. Then, if $\widetilde Y$ is a $\zz/p\zz$-$L$-space, so is $Y$.
\end{corollary}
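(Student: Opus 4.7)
The plan is to reduce the corollary to a pointwise application of Theorem~\ref{thm:HFred}, one $\spc$ structure at a time. The strategy is essentially bookkeeping: translate the $\zz/p\zz$-$L$-space hypothesis on $\widetilde Y$ into the vanishing of each $\HFred(\widetilde Y, \pi^*\s; \zz/p\zz)$, then use the already-established Smith-type inequality to transfer this vanishing to $Y$.

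First, I would check that the hypotheses of Theorem~\ref{thm:HFred} are met. A $\zz/p\zz$-$L$-space is by definition a rational homology sphere, so $\widetilde Y$ is one. The fundamental-group observation made just after Theorem~\ref{thm:Smith} in the paper then forces $Y$ to be a rational homology sphere as well: any surjection $\pi_1(Y) \to \zz$ would restrict nontrivially to the finite-index subgroup $\pi_1(\widetilde Y)$, contradicting $b_1(\widetilde Y) = 0$. With both manifolds rational homology spheres and the covering regular of prime-power degree, Theorem~\ref{thm:HFred} applies.

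Next, fix an arbitrary $\spc$ structure $\s$ on $Y$ and pull it back to $\pi^*\s$ on $\widetilde Y$. The $\zz/p\zz$-$L$-space property of $\widetilde Y$ gives $\HFred(\widetilde Y, \pi^*\s; \zz/p\zz) = 0$, and Theorem~\ref{thm:HFred} yields
\[
\dim \HFred(Y, \s; \zz/p\zz) \;\leq\; \dim \HFred(\widetilde Y, \pi^*\s; \zz/p\zz) \;=\; 0,
\]
so $\HFred(Y,\s;\zz/p\zz)=0$. As $\s$ was arbitrary and $Y$ is a rational homology sphere, $Y$ is a $\zz/p\zz$-$L$-space. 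One could equivalently run the same argument through Corollary~\ref{cor:hatinequality}, combining the upper bound $\dim \HFhat(Y,\s;\zz/p\zz) \leq 1$ with the Euler characteristic lower bound $\dim \HFhat(Y,\s;\zz/p\zz) \geq |\chi| = 1$.

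There is no real obstacle here: the corollary is flagged by the authors as an immediate consequence of the preceding inequalities. The only non-automatic input is the descent of the rational homology sphere condition from $\widetilde Y$ to $Y$, which is already handled by the fundamental-group remark recalled above.
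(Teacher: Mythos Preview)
Your proposal is correct and follows exactly the approach in the paper: the authors' proof is a one-liner invoking Theorem~\ref{thm:HFred} together with the characterization of $\zz/p\zz$-$L$-spaces via vanishing of $\HFred$ in $\zz/p\zz$ coefficients. You have simply written this out in full detail, including the check that $Y$ is a rational homology sphere, and your side remark about the alternative route through Corollary~\ref{cor:hatinequality} also matches what the paper says just before stating the corollary.
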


The $\zz/p\zz$-$L$-spaces are of interest because of various geometric properties. For example, a $\zz/p\zz$-$L$-space $Y$ cannot support co-orientable taut foliations, and no contact structure on $Y$ admits a symplectic filling with $b_2^+ > 0$; see \cite[Proof of Theorem 1.4]{GenusBounds}. 

A more natural notion is that of an $L$-space \cite{OSLens}, which is defined to be a rational homology sphere $Y$ with the property that $\HFhat(Y,\spinc)$ is free Abelian of rank $|H_1(Y;\mathbb{Z})|$. An $L$-space is a $\zz/p\zz$-$L$-space for all $p$. Examples of $L$-spaces include all elliptic manifolds \cite[Proposition 2.3]{OSLens}, all double branched covers of $S^3$ over quasi-alternating links \cite{BrDCov}, and many others. In the context of monopole Floer homology, $L$-spaces were studied in \cite{KMOS}.

Corollary~\ref{cor:ZpL} has the following implication with regard to $L$-spaces:

\begin{corollary}\label{cor:coversL}
Suppose that $\pi:\widetilde{Y} \to Y$ is a regular covering of orientable $3$-manifolds, such that $\widetilde{Y}$ is an $L$-space, and the group of deck transformations is solvable. Further, suppose that for any intermediate cover $Y'$ (i.e., such that there exist possibly trivial covers $\widetilde{Y} \to Y'$ and $Y' \to Y$), the group $\HFhat(Y')$ is torsion-free. Then $Y$ is an $L$-space.   
\end{corollary}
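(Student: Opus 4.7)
The plan is to factor $\pi$ through a tower $\widetilde Y = Y_0 \to Y_1 \to \cdots \to Y_k = Y$ of single-step prime-order regular covers and then iterate Corollary~\ref{cor:ZpL}. Since $\widetilde Y$ is an $L$-space, it is a closed rational homology sphere, so the covering $\pi$ has finite degree and the deck group $G$ is a finite solvable group. I would choose a composition series
\[ 1 = G_0 \triangleleft G_1 \triangleleft \cdots \triangleleft G_k = G \]
in which each successive quotient $G_{i+1}/G_i$ is cyclic of some prime order $p_i$. Set $Y_i := \widetilde Y/G_i$. Because $\widetilde Y$ and $Y$ are connected and orientable, the pullback of an orientation of $Y$ gives a $G$-invariant orientation of $\widetilde Y$, so the deck action is orientation-preserving; hence each intermediate quotient $Y_i$ is orientable. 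By the remark following Theorem~\ref{thm:Smith}, each $Y_i$ is also a rational homology sphere. Normality of $G_i$ in $G_{i+1}$ implies that $Y_i \to Y_{i+1}$ is a regular $p_i$-sheeted covering with deck group $\zz/p_i\zz$, while the existence of covers $\widetilde Y \to Y_i \to Y$ shows that $Y_i$ is an intermediate cover in the sense of the hypothesis, so $\HFhat(Y_i;\zz)$ is torsion-free.

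I would then induct on $i$ to show that every $Y_i$ is an $L$-space. The base case $i=0$ is the hypothesis on $\widetilde Y$. For the inductive step, assume $Y_i$ is an $L$-space. Then $Y_i$ is a $\zz/p_i\zz$-$L$-space, and Corollary~\ref{cor:ZpL} applied to the regular $p_i$-sheeted cover $Y_i \to Y_{i+1}$ yields that $Y_{i+1}$ is a $\zz/p_i\zz$-$L$-space, i.e.,
\[ \dim_{\zz/p_i\zz} \HFhat(Y_{i+1}; \zz/p_i\zz) = |H_1(Y_{i+1};\zz)|. \]
Since $\HFhat(Y_{i+1};\zz)$ is finitely generated and torsion-free, the universal coefficient theorem identifies its $\zz$-rank with this mod-$p_i$ dimension, so $\HFhat(Y_{i+1};\zz)$ is free abelian of rank $|H_1(Y_{i+1};\zz)|$. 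Hence $Y_{i+1}$ is an $L$-space. After $k$ steps we conclude that $Y = Y_k$ is an $L$-space.

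The main obstacle is conceptual rather than computational: one must check that the quotients $Y_i = \widetilde Y/G_i$ genuinely fit the hypotheses of Corollary~\ref{cor:ZpL} at every stage (orientability, rational-homology-sphere condition, regularity of $Y_i \to Y_{i+1}$, and being intermediate covers of $Y$), and one must bridge the gap between a mod-$p$ $L$-space conclusion and an integral $L$-space conclusion. This latter step is precisely where the torsion-freeness hypothesis is unavoidable, via the universal coefficient theorem; without it, accumulated $p$-torsion in intermediate $\HFhat$ groups could obstruct promoting the Smith-type inequality to its integral counterpart.
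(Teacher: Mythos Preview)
Your proof is correct and follows the same approach as the paper, which records only the one-line observation that the result is immediate from Corollary~\ref{cor:ZpL} and the universal coefficient theorem; you have simply written out the implicit tower argument (using a composition series of the finite solvable deck group) and the UCT step in full detail.
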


The torsion-free assumption in Corollary~\ref{cor:coversL} is not unreasonable. In fact, there are no known examples of rational homology spheres with $\HFhat$ containing torsion.  (For the first examples of $\mathbb{Z}$-torsion in Heegaard Floer homology, see \cite{JabukaMark}. Those examples have $b_1 > 0$.) Potentially, the notions of $L$-space and $\zz/p\zz$-$L$-space are the same.

Ozsv\'ath and Szab\'o asked whether, among prime, closed, connected 3-manifolds,  $L$-spaces are exactly those that admit no co-orientable taut foliations. In a similar vein, Boyer, Gordon, and Watson \cite{BoyerGordonWatson} conjectured that an irreducible rational homology 3-sphere is an $L$-space if and only if its fundamental group is not left-orderable. It is worth noting that both of these potential alternate characterizations of $L$-spaces behave well under taking covers. Indeed, suppose that $\widetilde Y \to Y$ is a covering between compact $3$-manifolds. If $Y$ has a co-orientable taut foliation, then so does $\widetilde Y$. Also, if $\pi_1(Y)$ admits a left-ordering, then so does its subgroup $\pi_1(\widetilde Y)$. In view of these observations, the following question was raised in \cite{BoyerGordonWatson}:

\begin{question}[Boyer-Gordon-Watson, \cite{BoyerGordonWatson}]
\label{que1}
If $\pi:\widetilde{Y} \to Y$ is a covering map, $Y$ is orientable, and $\widetilde{Y}$ is an $L$-space, does $Y$ have to be an $L$-space?
\end {question}

Our Corollary~\ref{cor:coversL} can be viewed as a partial answer to Question~\ref{que1}. More evidence for an affirmative answer to Question~\ref{que1} comes from manifolds with Sol geometry or with Seifert geometry, or more generally graph manifolds. For Seifert fibrations, the equivalence between $L$-spaces, non-left-orderable fundamental groups, and the absence of co-orientable taut foliations has already been established \cite{LiscaStipsicz3, BoyerRolfsenWiest, Peters, BoyerGordonWatson}.  This implies that the answer to Question~\ref{que1} is ``yes,'' provided that $Y$ has Seifert geometry.  If one works with $\zz/2\zz$-coefficients, the same holds for Sol geometry \cite{BoyerRolfsenWiest, BoyerGordonWatson} and graph manifolds \cite{BoyerClay, HRRW}.   

Note that the covering of the Poincar\'e homology sphere by $S^3$ is an example of a regular cover with non-solvable automorphism group, for which the conclusion of Corollary~\ref{cor:coversL} still holds. By taking intermediate covers corresponding to non-normal subgroups of the binary icosahedral group, we can also get examples of irregular covers between $L$-spaces.

\begin{remark}
It is worth pointing out that a rational homology sphere that covers an $L$-space is not necessarily an $L$-space. For example, consider the double cover of $S^2$ over itself, branched at two points. By introducing two orbifold points of type $1/4$ at the branch points on the base, and an orbifold point of type $2/3$ somewhere else on the sphere, we obtain a genuine double cover between $2$-orbifolds: 
$$S^2(2/3, 2/3, 1/2, 1/2) \to S^2(2/3, 1/4, 1/4).$$
This pulls back to a double cover between Seifert fibered rational homology spheres 
$$ M(-2; 2/3, 2/3, 1/2, 1/2) \to M(-1; 2/3, 1/4, 1/4),$$
in the notation of \cite{LiscaStipsicz3}. By the criterion in \cite[Theorem 1.1]{LiscaStipsicz3}, $M(-1; 2/3, 1/4, 1/4)$ is an $L$-space, whereas $M(-2; 2/3, 2/3, 1/2, 1/2)$ is not.
\end{remark}

In view of the inequality \eqref{eqn:hfsmith}, it is natural to ask the following strengthened version of Question~\ref{que1}:

\begin{question}
\label{que2}
If $\pi:\widetilde{Y} \to Y$ is a covering map between closed orientable $3$-manifolds, and $\spinc$ is a $\spc$ structure on $Y$, do we necessarily have 
$$ \rk \HFhat(Y,\spinc) \leq \rk \HFhat(\widetilde Y, \pi^*\spinc) \ \ ?$$
Here, $\rk$ denotes the rank of an Abelian group.
\end{question}

Some partial results along these lines, for double covers with $b_1 > 0$, were obtained by Lipshitz and Treumann using methods from bordered Floer homology \cite{LipshitzTreumann}.

We now turn to some concrete topological applications of our covering inequalities. For any family of rational homology spheres where we can obtain a good understanding of their monopole or Heegaard Floer homologies, we can look for obstructions to covering. For example, we have:
\begin{corollary}\label{cor:alternating-dbc}
Let $K$ be a hyperbolic alternating knot and let $L$ be any quasi-alternating link. Then, the double branched cover $\Sigma(L)$ is not an $r^n$-sheeted regular cover of $S^3_{p/q}(K)$ for any prime $r$.  
\end{corollary}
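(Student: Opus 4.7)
The plan is to derive a contradiction by tracing an obstruction from the hypothetical cover through to the Heegaard Floer homology of surgeries on $K$. Suppose that $\pi:\Sigma(L) \to S^3_{p/q}(K)$ is a regular $r^n$-sheeted cover with $r$ prime. Since $L$ is quasi-alternating, $\det(L)\neq 0$, so $\Sigma(L)$ is a rational homology sphere; by the observation following Theorem~\ref{thm:Smith}, $S^3_{p/q}(K)$ is then also a rational homology sphere, forcing $p\neq 0$. By \cite{BrDCov}, $\Sigma(L)$ is an $L$-space, hence in particular a $\zz/r\zz$-$L$-space, and Corollary~\ref{cor:ZpL} applied with the prime $r$ gives that $S^3_{p/q}(K)$ is a $\zz/r\zz$-$L$-space as well.

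The next step is to upgrade this $\zz/r\zz$-statement to a rank statement. For any rational homology sphere $Y$ and any $\spc$ structure $\s$, one has $\rk \HFhat(Y,\s;\zz) \geq 1$ (by the Euler characteristic), and the universal coefficient theorem yields
\[
\dim_{\zz/r\zz} \HFhat(Y,\s;\zz/r\zz) \;=\; \rk \HFhat(Y,\s;\zz) \;+\; 2\, t_r(Y,\s),
\]
where $t_r(Y,\s)$ counts the cyclic $r$-primary summands in the torsion of $\HFhat(Y,\s;\zz)$. The $\zz/r\zz$-$L$-space condition $\dim \HFhat(Y,\s;\zz/r\zz)=1$ for every $\s$ therefore forces $\rk \HFhat(S^3_{p/q}(K),\s;\zz)=1$ in each $\s$; equivalently, $\HFred(S^3_{p/q}(K);\qq)=0$.

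Finally, one invokes the Ozsv\'ath--Szab\'o characterization of $L$-space surgeries in its rational-coefficient form: the vanishing of $\HFred(\cdot;\qq)$ for even a single rational surgery slope on $K\subset S^3$ forces $K$ to be an $L$-space knot, with Alexander polynomial of the restricted form $\sum_i (-1)^i t^{a_i}$. Combined with the thinness of the knot Floer homology of alternating knots, this Alexander polynomial constraint forces $K = T(2,2m+1)$ for some integer $m$. Such torus knots are never hyperbolic, contradicting our hypothesis on $K$.

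The main obstacle is the last step, which requires citing the $\qq$-coefficient version of the Ozsv\'ath--Szab\'o $L$-space knot theorem together with the alternating-implies-torus refinement. Both follow routinely from the surgery mapping cone formula and the Alexander polynomial restrictions intrinsic to thin knots, so the hard work really lies upstream in Corollary~\ref{cor:ZpL}.
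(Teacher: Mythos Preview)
Your proof is correct and follows essentially the same route as the paper: both use Corollary~\ref{cor:ZpL} together with the fact (from \cite{BrDCov}) that $\Sigma(L)$ is an $L$-space to conclude that $S^3_{p/q}(K)$ would have to be a $\zz/r\zz$-$L$-space, and both then derive a contradiction from the Ozsv\'ath--Szab\'o constraints on knots admitting $L$-space surgeries. The paper compresses this last step into a single citation of \cite{OSLens}, while you unpack it; your detour through $\qq$-coefficients via the universal coefficient theorem is valid but not needed, since the arguments of \cite{OSLens} already go through with $\zz/r\zz$-coefficients and yield the same Alexander polynomial restriction, which for an alternating knot forces $K$ to be a $(2,2m+1)$ torus knot.
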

\begin{proof}
By \cite{OSLens}, $S^3_{p/q}(K)$ is not a $\zz/r\zz$-$L$-space for any $r$.  If $L$ is a quasi-alternating link, then as discussed above, $\Sigma(L)$ is an $L$-space; cf. \cite{BrDCov}. The result now follows from Corollary~\ref{cor:ZpL}.  
\end{proof}

\begin{remark}
It is interesting to compare Corollary~\ref{cor:alternating-dbc} to the case where $K$ is a non-hyperbolic alternating knot, i.e., $K = T(2,2n+1)$.  Then there are infinitely many lens space surgeries (namely those of the form $p/q$ where $|(4n+2)q - p| = 1$ \cite{Moser}). Any such lens space regularly covers infinitely many other lens spaces; in fact, we can find a cover of this form with any finite cyclic deck transformation group. Notice that lens spaces are branched double covers of two-bridge links with non-zero determinant, and such two-bridge links are quasi-alternating by \cite{BrDCov}.
\end{remark}

Let us focus further on manifolds obtained by Dehn surgery on knots in $S^3$. The Heegaard Floer homology of surgeries on knots can be computed in terms of the knot Floer complex; see \cite{IntSurg, RatSurg}. This yields a rank inequality between the reduced Heegaard Floer homologies of different surgeries. Using Theorem~\ref{thm:HFred}, we obtain:

\begin{theorem}\label{thm:surgerysameK}
Let $K$ be a non-trivial knot in $S^3$ and let $p,q,p',q'$ be positive integers.  If $\frac{p}{q} \leq 1$ and $\left \lceil \fqp \right  \rceil < \left \lfloor \fqpprime \right \rfloor$, then $S^3_{\fpq}(K)$ cannot be an $r^n$-sheeted regular cover of $S^3_{\fpqprime}(K)$ for any prime $r$.  
\end{theorem}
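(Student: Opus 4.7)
My plan is to argue by contradiction, combining Theorem~\ref{thm:HFred} with the rational surgery formula of Ozsv\'ath--Szab\'o \cite{RatSurg}. Suppose, for a contradiction, that $S^3_{\fpq}(K)$ is an $r^n$-sheeted regular cover of $S^3_{\fpqprime}(K)$ for some prime $r$. By Theorem~\ref{thm:HFred},
\begin{equation*}
\dim \HFred(S^3_{\fpqprime}(K); \zz/r\zz) \;\leq\; \dim \HFred(S^3_{\fpq}(K); \zz/r\zz),
\end{equation*}
so it suffices to establish the opposite strict inequality under the numerical hypotheses.

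The rational surgery formula expresses $\HFplus(S^3_{\fpq}(K), i)$ (for $i \in \zz/p\zz$) as the homology of an explicit mapping cone, whose summands are copies of the groups $A^+_s(K)$ and $B^+_s(K)$ extracted from the knot Floer complex of $K$, indexed by $s \in \zz$ and governed by the floor function $\lfloor (i+ps)/q\rfloor$. A fiber-counting argument shows that, within the $i$-th $\spc$-sector, each $A^+_t$ appears with multiplicity $\lfloor q/p\rfloor$ or $\lceil q/p\rceil$. Since $B^+_s(K)\cong\HFplus(S^3)$ has trivial reduced part, and $\HFred(A^+_s(K))=0$ for $|s|\geq g(K)$, only finitely many summands contribute to $\HFred$; because $K$ is non-trivial, at least one $A^+_s(K)$ has non-vanishing $\HFred$ over $\zz/r\zz$. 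A careful tracking of the kernels and cokernels of the mapping cone differential --- which separates the intrinsically reduced contributions of the $A^+_s(K)$ from the cancellations against the $B^+_s(K)$ towers --- yields a formula of the shape
\begin{equation*}
\dim \HFred(S^3_{\fpq}(K); \zz/r\zz) \;=\; \sum_{s \in \zz}\, m_{p,q}(s)\cdot c_s(K; r),
\end{equation*}
where each $c_s(K;r)\geq 0$ depends only on $K$ and $r$ (and is non-zero for some $s$), and each multiplicity $m_{p,q}(s)$ is squeezed between $\lfloor q/p\rfloor$ and $\lceil q/p\rceil$.

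The conclusion is then combinatorial. The hypothesis $\fpq\leq 1$ guarantees $q/p\geq 1$, so the multiplicities are all positive, while the strict gap $\lceil q/p\rceil<\lfloor q'/p'\rfloor$ forces
\begin{equation*}
m_{p,q}(s)\;\leq\;\lceil q/p\rceil\;<\;\lfloor q'/p'\rfloor\;\leq\; m_{p',q'}(s)
\end{equation*}
for every $s$. Pairing this pointwise inequality with the non-negative and not identically zero weights $c_s(K;r)$ produces $\dim\HFred(S^3_{\fpqprime}(K); \zz/r\zz) > \dim\HFred(S^3_{\fpq}(K); \zz/r\zz)$, contradicting the covering inequality.

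The main obstacle is justifying the displayed decomposition of $\dim\HFred$ and pinning down that the multiplicities $m_{p,q}(s)$ genuinely lie in $\{\lfloor q/p\rfloor,\lceil q/p\rceil\}$ after passing to the reduced part of the mapping cone. This requires a careful analysis of how, in each $\spc$ structure, the $\lfloor q/p\rfloor$ or $\lceil q/p\rceil$ copies of each $A^+_s(K)$ interact with the $B^+_s(K)$-towers under the surgery differential. Once this accounting is in hand, the hypotheses $\fpq\leq 1$ and $\lceil q/p\rceil<\lfloor q'/p'\rfloor$ enter only through the clean pointwise multiplicity inequality above, and the role of non-triviality of $K$ reduces to the single assertion that some $c_s(K;r)$ is strictly positive.
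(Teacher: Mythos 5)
Your high-level strategy is the paper's strategy (covering inequality plus the rational surgery formula plus the multiplicity gap between $\lceil q/p\rceil$ and $\lfloor q'/p'\rfloor$), but the step you defer as ``the main obstacle''---the exact decomposition $\dim\HFred(S^3_{p/q}(K);\zz/r\zz)=\sum_s m_{p,q}(s)\,c_s(K;r)$ with weights $c_s$ depending only on $K$ and $r$ and multiplicities trapped in $[\lfloor q/p\rfloor,\lceil q/p\rceil]$---is not merely unjustified; it is false, so no amount of ``careful tracking'' will pin it down. Take $K=T(2,5)$, an $L$-space knot with $\nu=g=2$, and compare $1$- and $1/2$-surgery: each has a single $\spc$ structure, and $q/p$ is an integer in both cases, so your multiplicities are forced to equal $1$, respectively $2$. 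Jabuka's formula (Theorem~\ref{thm:jabuka}) gives $\dim\HFhat(S^3_{1/q}(K))=-1+6q$, hence $\dim\HFred(S^3_{1}(K))=2$ while $\dim\HFred(S^3_{1/2}(K))=5$; your formula would force the second number to be exactly twice the first. The underlying reason is that the interaction of the $A^+_s$ with the $B^+$-towers does not contribute a slope-independent weight per $s$: it produces additive corrections per $\spc$ structure depending on where $p/q$ sits relative to $2\nu-1$ and on the sign of $p$ (the $\mp 1$ constants in Theorem~\ref{thm:jabuka}); in the example the answer $3q-1$ is affine, not linear, in the multiplicity.

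A second error: the claim that non-triviality of $K$ forces some $A^+_s$ to have non-vanishing reduced homology over $\zz/r\zz$ fails for every $L$-space knot (e.g.\ any torus knot), and for exactly those knots all of your intrinsic weights would vanish. What is true, and what the paper uses, is that $\dim H_*(\widehat{A}_s)=1$ for all $s$ forces $\nu>0$ for a non-trivial knot; strictness must then come from the terms $\sum_{|s|<\nu}\phi^{p/q}_{[i]}(s)$, which is precisely where the hypothesis $p/q\le 1$ enters (so each such term is at least $1$) and where $\lfloor q'/p'\rfloor\ge 2$ is needed to exclude $p'/q'$ from the regime $(2\nu-1)q'\le p'$, in which those terms drop out of the formula. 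The paper's Proposition~\ref{prop:sameKinequality} does this bookkeeping through Jabuka's closed formula, with a case analysis on $\nu=0$ versus $\nu>0$, and---because that formula requires $\nu\ge 0$ after possibly mirroring $K$, which flips the sign of the slope---it proves the strict inequality for both positive and negative surgery coefficients, an issue your outline never confronts. Your combinatorial endgame is fine, but the decomposition it rests on must be replaced by the actual regime-dependent surgery formula, and those correction terms are exactly where the hypotheses of the theorem do their work.
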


While Theorem~\ref{thm:surgerysameK} can be considerably strengthened, we work with it in its current incarnation to keep both the statement and proof simple. 

\begin{remark}
The condition that $K$ be non-trivial is clearly necessary, as for all non-zero $q \in \Z$ and $\fpqprime \in \mathbb{Q},$ the surgery $S^3_{1/q}(U) = S^3$ is a $p$-fold regular cyclic cover of $S^3_{\fpqprime}(U) = L(p',q')$.    
\end{remark}

\begin{remark}
If $K$ is the right-handed trefoil, then there are infinitely many pairs of surgeries for which one regularly covers the other with number of sheets a prime-power. Indeed, $p/q$ surgery on $K$ gives the lens space $L(p, 4q)$ when $p = 6q \pm 1$; cf. \cite[Proposition 3.2]{Moser}. Let $r^n$ be a prime power of the form $6k+1$ for a positive integer $k$. Then $S^3_{(6q \pm 1)/q}(K) = L(6q \pm 1, 4q)$ is a regular $r^n$-cover of $S^3_{(6q' \pm 1)/q'}(K) = L(6q' \pm 1, 4q')$ for $q'=q+k(6q\pm 1)$. Note that these examples have surgery coefficients greater than $1$, unlike in the statement of Theorem~\ref{thm:surgerysameK}. Similar examples can be found for other torus knots.
\end{remark}

The results and examples above raise the following question:

\begin{question}
\label{q1}
For what knots $K \subset S^3$ do there exist pairs of surgery coefficients $\frac{p}{q} \neq \frac{p'}{q'}$ such that $S^3_{p/q}(K)$ is a cover of $S^3_{p'/q'}(K)$?
\end{question}

Surgeries of this form can be called {\em virtually cosmetic}. Thus, Question~\ref{q1} generalizes the problem of characterizing all {\em cosmetic surgeries}, i.e., those with $S^3_{p/q}(K) \cong S^3_{p'/q'}(K)$. This is related to the cosmetic surgery conjecture, which asks if a non-trivial knot can have orientation-preserving homeomorphic surgeries \cite[Conjecture 6.1]{GordonICM}; for recent progress using similar techniques to those used here, see for example \cite{RatSurg, NiWu}.

In a different direction, we can also obtain obstructions to covering between surgeries on different knots. A simple class of examples come from {\em $L$-space knots}, which are knots for which some positive surgery is an $L$-space.  We have the analogous notion of $\zz/r\zz$-$L$-space knots.   
\begin{theorem}\label{thm:surgerylspaceK}
Let $K$ and $K'$ be non-trivial $\zz/r\zz$-$L$-space knots and $p,p',q,q'$ positive integers satisfying 
$$
(2g(K) - 1) \lceil  q/p \rceil < (2g(K') - 1) \lfloor q'/p' \rfloor,
$$ 
then $S^3_{p/q}(K)$ is not an $r^n$-sheeted regular covering of $S^3_{p'/q'}(K')$ for any prime $r$.
\end{theorem}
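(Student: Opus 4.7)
The plan is to argue by contradiction. Suppose $\pi \colon S^3_{p/q}(K) \to S^3_{p'/q'}(K')$ is a regular $r^n$-sheeted covering. Since $p, q, p', q' > 0$ and $K, K'$ are non-trivial, both surgered manifolds are rational homology spheres, and Theorem~\ref{thm:HFred} applies with $\zz/r\zz$-coefficients to give
\begin{equation*}
\dim \HFred(S^3_{p'/q'}(K'); \zz/r\zz) \;\leq\; \dim \HFred(S^3_{p/q}(K); \zz/r\zz).
\end{equation*}
It therefore suffices to establish the two estimates
\begin{equation*}
\dim \HFred(S^3_{p/q}(K); \zz/r\zz) \leq (2g(K)-1)\lceil q/p\rceil \quad \text{and} \quad \dim \HFred(S^3_{p'/q'}(K'); \zz/r\zz) \geq (2g(K')-1)\lfloor q'/p'\rfloor,
\end{equation*}
since combined with the hypothesis these contradict the above inequality.

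Both estimates will come from analyzing the Ozsv\'ath-Szab\'o rational surgery mapping cone formula \cite{RatSurg} applied over $\zz/r\zz$. Since $K$ is a $\zz/r\zz$-$L$-space knot, some positive surgery on $K$ is a $\zz/r\zz$-$L$-space, and combined with the large surgery formula this forces $CFK^\infty(K; \zz/r\zz)$ to have the staircase shape of an ordinary $L$-space knot. Consequently, each $A_s^+$ and $B_s^+$ appearing in the mapping cone is a copy of the tower $\tower$, and the edge maps $v_s, h_s$ are multiplication by $U^{V_s}$ and $U^{H_s}$ with $V_s = H_{-s} \geq 0$, non-increasing in $|s|$, and vanishing for $|s| \geq g(K)$. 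The dimension of $\HFred(S^3_{p/q}(K); \zz/r\zz)$ is then read off the mapping cone as an explicit sum over the $p$ Spin$^c$-structures, each contributing $2\max\bigl(V_{\bullet},H_{\bullet}\bigr)$ for indices determined by the quotient and remainder of the Spin$^c$ label modulo $q$.

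A direct combinatorial count on this sum yields both bounds. The $p$ Spin$^c$-structures partition into ``bands'' indexed by an integer $s$ arising from the relevant floor/ceiling; the number of bands with nonzero contribution is at most $\lceil q/p\rceil$, while the total contribution per band is bounded by $2g(K)-1$ by an identity relating $\sum V_s$ to the genus for $L$-space knots. This gives the upper bound. For the lower bound, the fact that $K'$ is a non-trivial $\zz/r\zz$-$L$-space knot gives $\tau(K') = g(K') \geq 1$ and so $V_0(K') \geq 1$, forcing at least $\lfloor q'/p'\rfloor$ bands to realize the saturated contribution $2g(K')-1$.

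The main obstacle will be the combinatorial bookkeeping in the mapping cone: parametrizing the Spin$^c$-structures of $S^3_{p/q}(K)$, sorting them into bands by the relevant $s$-index, verifying that each band contributes at most $2g-1$ via the telescoping structure of the $V_s$'s for $L$-space knots, and showing in the lower-bound direction that $\lfloor q'/p'\rfloor$ bands attain the maximum. A secondary technical point is confirming that both the rational surgery formula and the staircase description remain valid with $\zz/r\zz$-coefficients; the former is a verbatim $\zz/r\zz$-analogue of \cite{RatSurg}, while the latter follows from the $\zz/r\zz$-$L$-space condition on a large positive surgery via the large surgery formula.
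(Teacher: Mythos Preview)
Your proposal has a genuine gap at the very first step. Theorem~\ref{thm:HFred} is a \emph{per-Spin$^c$} inequality: it says $\dim \HFred(Y',\spinc';\zz/r\zz) \leq \dim \HFred(\widetilde Y,\pi^*\spinc';\zz/r\zz)$ for each $\spinc'$ on the base. You sum this over all $\spinc'$ to obtain an inequality between the total $\HFred$ dimensions, but that only works if $\pi^*$ is injective on Spin$^c$ structures. For regular covers between rational homology spheres this is generally false (e.g.\ $S^3 \to L(p,1)$ collapses all $p$ Spin$^c$ structures to one), and nothing in the hypotheses rules it out here. Without injectivity, several $\spinc'$ can hit the same $\pi^*\spinc'$, and you only get a bound on the \emph{maximum} of their $\HFred$ dimensions, not the sum. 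Your two estimates on total $\HFred$ then no longer combine to a contradiction.

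The paper sidesteps this entirely by working Spin$^c$-structure by Spin$^c$-structure with $\HFhat$ rather than total $\HFred$. Using Jabuka's formula (Theorem~\ref{thm:jabuka}) for a $\zz/r\zz$-$L$-space knot, one has $\mathcal{S}^{p/q}_{[i]}=0$ and $\nu=g$, so $\dim\HFhat(S^3_{p/q}(K),[i])$ is governed by $\sum_{|s|<g}\phi^{p/q}_{[i]}(s)$; the elementary bound $\lfloor q/p\rfloor \leq \phi^{p/q}_{[i]}(s) \leq \lceil q/p\rceil$ then gives a \emph{strict} inequality $\dim\HFhat(S^3_{p/q}(K),[i]) < \dim\HFhat(S^3_{p'/q'}(K'),[i'])$ valid for \emph{every} pair $[i],[i']$, which immediately contradicts Corollary~\ref{cor:hatinequality}. (The case $p/q \geq 2g(K)-1$ is handled separately by observing that the cover is then a $\zz/r\zz$-$L$-space while the base is not, and invoking Corollary~\ref{cor:ZpL}.) Your quantities $(2g-1)\lceil q/p\rceil$ and $(2g'-1)\lfloor q'/p'\rfloor$ are exactly the per-Spin$^c$ bounds on $\sum_{|s|<g}\phi^{p/q}_{[i]}(s)$ appearing there, so you were close --- but they should be interpreted as bounds on (half of) $\dim\HFhat$ in a fixed Spin$^c$ structure, not on total $\dim\HFred$. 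Reframed that way, no summing is needed and the argument goes through via Corollary~\ref{cor:hatinequality}.

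A secondary issue: your description of the mapping-cone combinatorics is too vague to evaluate. The claim that each Spin$^c$ structure contributes ``$2\max(V_\bullet,H_\bullet)$'' to $\dim\HFred$ is not correct as stated (that expression computes the $d$-invariant shift, not the reduced dimension), and the ``bands'' heuristic is not made precise. If you want to salvage the $\HFred$ route, you would need actual formulas for $\dim\HFred(S^3_{p/q}(K),[i])$ for $L$-space knots, and you would still have to phrase the final comparison per Spin$^c$ structure.
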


This paper is organized as follows. In Section~\ref{sec:spectrum} we outline the construction of the Seiberg-Witten Floer spectrum from \cite{Spectrum}. In Section~\ref{sec:proofcovers} we discuss covering spaces and prove Theorems~\ref{thm:Smith} and \ref{thm:HFred}, as well as their corollaries. In Section~\ref{sec:applications} we recall the knot surgery formula from \cite{RatSurg}, and use it to deduce Theorems~\ref{thm:surgerysameK} and \ref{thm:surgerylspaceK}.

\medskip
\noindent \textbf{Acknowledgements.} We thank Steve Boyer, Cameron Gordon, Robert Lipshitz, John Luecke, Alan Reid, Dylan Thurston and Liam Watson for helpful conversations. We are particularly grateful to Jianfeng Lin for suggesting the argument in the proof of Theorem~\ref{thm:HFred}.

\section{The Seiberg-Witten Floer spectrum}\label{sec:spectrum}
We review here the construction of the Seiberg-Witten Floer spectrum $\SWF(Y, \s)$, following \cite{Spectrum}. We mostly use the notational conventions from \cite{Equivalence}.  

\subsection{The Seiberg-Witten equations in global Coulomb gauge}
\label{sec:coulombs}
We will be studying the Seiberg-Witten equations on a tuple $(Y,g,\spinc,\Spin)$, where $Y$ is a rational homology three-sphere, $g$ is a metric on $Y$, $\spinc$ is a $\spc$ structure on $Y$, and $\Spin$ is a spinor bundle for $\spinc$.   We choose a flat $\spc$ connection $A_0$ on $\Spin$ which gives an affine identification of $ \Omega^1(Y; i \R)$ with $\spc$ connections on $\Spin$.  

Consider the configuration space 
\[
\C(Y,\spinc) = \Omega^1(Y; i\R) \oplus \Gamma(\Spin).
\]
The gauge group $\G = \G(Y):=C^\infty(Y,S^1)$ acts on $\C(Y,\spinc)$ by $u \cdot (a,\phi) = (a - u^{-1}du,u \cdot \phi)$. Since $b_1(Y)=0$, each $u\in \G$ can be written as $e^{f}$ for some $f: Y \to i\R$. We define the {\em normalized gauge group} $\Go$ to consist of those $u=e^{f} \in \G$ for some $f$ with $\int_Y f = 0$. 

Let $\rho:TY \otimes \C \to \text{End}(\Spin)$ be the Clifford multiplication.  For $\phi \in \Gamma(\Spin)$ we will write $(\phi \phi^*)_0$ for the trace-free part of $\phi \phi^* \in \Gamma( \text{End}(\Spin))$, and let 
$$\tau(\phi, \phi) = \rho^{-1}(\phi \phi^*)_0 \in \Gamma(iTY) \cong \Gamma(iT^*Y)=\Omega^1(Y; i\R).$$  Further, for $a \in \Omega^1(Y; i\R)$, we will use $D_a : \Gamma(\Spin) \to \Gamma(\Spin)$ to denote the Dirac operator corresponding to the connection $A_0 + a$, and $D$ for the case of $a = 0$.  

Inside of $\C(Y,\spinc)$ we have a {\em global Coulomb slice} to the action of $\Go$: 
$$W = \ker d^* \oplus \Gamma(\Spin) \subset \C(Y, \spinc),$$
where $d^*$ is meant to act on imaginary $1$-forms. 

For any integer $k$, we let $W_k$ denote the $L^2_k$ Sobolev completion of $W$. For $k \geq 5$, we consider the {\em Seiberg-Witten map}:
$$l + c: W_{k} \to W_{k-1},$$
where
\begin{eqnarray}
l(a,\phi) &=& (*da, D \phi) \label{eq:lmap} \\
c(a,\phi) &=& (\pi \circ \tau(\phi,\phi),\rho(a)\phi +  \xi(\phi)\phi) \label{eq:cmap},
\end{eqnarray}
where $\pi$ denotes the $L^2$ orthogonal projection to $\ker d^*$, and $\xi(\phi): Y \to i\rr$ is characterized by $d\xi(\phi) = (1-\pi)\circ \tau(\phi, \phi)$ and $\int_Y \xi(\phi) =0.$ Note that $l$ is a linear Fredholm operator, and $c$ is compact.  The Seiberg-Witten map is the gradient in an appropriate metric of the Chern-Simons-Dirac functional, $\L$, defined by 
\[
\L(a,\phi) = \frac{1}{2} \Bigl(\int_Y \langle \phi, D_a \phi \rangle  - \int_Y a \wedge da \Bigr).  
\]

Let $I \subset \R$ be an interval. If a map $\gamma: I \to W_k$ satisfies
\[
\frac{d}{d t} \gamma(t) = - (l + c)(\gamma(t)), 
\]
we say that $\gamma$ is a {\em Seiberg-Witten trajectory} (in Coulomb gauge). Such a trajectory $\gamma=(a(t), \phi(t)): \rr \to W_k$ is said to be {\em of finite type} if $\L(\gamma(t))$ and $\|\phi(t)\|_{C^0}$ are bounded in $t$.

\subsection{Finite-dimensional approximation} \label{sec:fdax}
For $\lambda > 1$, let us denote by $\vml$ the finite-dimensional subspace of $W$ spanned by the eigenvectors of $l$ with eigenvalues in the interval $(-\lambda,\lambda)$. The $L^2$ orthogonal projection from $W$ to $\vml$ will be denoted $\tilde{p}^\lambda$. We modify this to make it smooth in $\lambda$, by defining:
\begin{equation}
\label{eq:plprel}
\pml = \int^1_0 \beta(\theta) \tilde{p}^{\lambda - \theta}_{-\lambda + \theta} d \theta, 
\end{equation}
where $\beta$ is a smooth, non-negative function that is non-zero exactly on $(0,1)$, and such that $\int_\rr \beta(\theta) d\theta =1$. Observe that the image of $\pml$ is the subspace $\vml$.

On $\vml$, we consider the flow equation
\begin{equation}
\label{eq:approxgrad}
\frac{d}{d t} \gamma(t)=-(l + \pml c)(\gamma(t)). 
\end{equation}

We refer to solutions of \eqref{eq:approxgrad} as  {\em approximate Seiberg-Witten trajectories}. 

Fix a natural number $k \geq 5$. There exists a constant $R > 0$, such that all Seiberg-Witten trajectories $\gamma: \rr \to W$ of finite type are contained in $B(R)$, the ball of radius $R$ in $W_k$.  The following is a corresponding  compactness result for approximate Seiberg-Witten trajectories:

\begin{proposition}[Proposition 3 in \cite{Spectrum}] \label{prop:proposition3}
For any $\lambda$ sufficiently large (compared to $R$), if $\gamma: \mathbb{R} \to \vml$ is a trajectory of the gradient flow $(l + \pml c)$, and $\gamma(t)$ is in $\overline{B(2R)}$ for all $t$, then in fact $\gamma(t)$ is contained in $B(R)$.
\end{proposition}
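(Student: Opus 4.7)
The plan is to proceed by contradiction, producing a limiting finite-type Seiberg--Witten trajectory $\gamma_\infty$ whose image escapes $B(R)$, in violation of the defining property of $R$. Suppose the statement fails: there exist $\lambda_n \to \infty$ and approximate trajectories $\gamma_n : \rr \to W^{\lambda_n}$ with $\gamma_n(\rr) \subset \overline{B(2R)}$ and $\|\gamma_n(t_n)\|_{L^2_k} \geq R$ for some times $t_n$. After translating in time I may assume $t_n = 0$.

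First I would establish the compactness needed to extract $\gamma_\infty$. The main analytic input is that $c : W_k \to W_{k-1}$ is compact, so $c\bigl(\overline{B(2R)}\bigr) \subset W_{k-1}$ is relatively compact; since the projections $p^\lambda$ converge strongly to the identity on $W_{k-1}$ as $\lambda \to \infty$, the convergence is in fact uniform on this precompact set. Consequently the right-hand sides $-l\gamma_n - p^{\lambda_n} c(\gamma_n)$ are uniformly bounded in $W_{k-1}$. Using the flow equation to trade time derivatives for spatial derivatives (exploiting ellipticity of $l$ on the Coulomb slice) and bootstrapping, I can upgrade to uniform $W_{k+1}$-bounds on $\gamma_n(0)$ and uniform bounds on $\gamma_n$ in $C^j_{\mathrm{loc}}(\rr; W_{k+1-j})$ for each $j$. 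The compact embedding $W_{k+1} \hookrightarrow W_k$ and Arzel\`a--Ascoli then give, after passing to a subsequence, convergence $\gamma_n \to \gamma_\infty$ in $C^0_{\mathrm{loc}}(\rr; W_k)$ for some continuous $\gamma_\infty : \rr \to \overline{B(2R)}$.

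Next I would verify that $\gamma_\infty$ is a genuine finite-type Seiberg--Witten trajectory, and extract the contradiction. Passing to the limit in $\dot\gamma_n = -(l + p^{\lambda_n} c)(\gamma_n)$ is legitimate because $l$ and $c$ are continuous $W_k \to W_{k-1}$ and $p^{\lambda_n} c(\gamma_n) \to c(\gamma_\infty)$ uniformly in $W_{k-1}$ on compact time intervals by the uniform convergence of $p^\lambda$ on $c(\overline{B(2R)})$. The uniform $W_k$-bound yields a uniform $C^0$-bound on the spinor (via Sobolev embedding) and boundedness of $\L$ along the trajectory, so $\gamma_\infty$ is of finite type. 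By the defining property of $R$ we conclude $\gamma_\infty(\rr) \subset B(R)$, so in particular $\|\gamma_\infty(0)\|_{L^2_k} < R$. On the other hand, the strong $W_k$-convergence at $t = 0$ gives $\|\gamma_\infty(0)\|_{L^2_k} = \lim_n \|\gamma_n(0)\|_{L^2_k} \geq R$, the desired contradiction.

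The main obstacle is the bootstrapping used to upgrade a uniform $W_k$-bound on $\{\gamma_n\}$ to strong $C^0_{\mathrm{loc}}(\rr; W_k)$-convergence: the projections $\pml$ commute with $l$ on $\vml$ but not with the quadratic nonlinearity $c$, so the required higher regularity must be extracted from the flow equation itself, relying crucially on compactness of $c : W_k \to W_{k-1}$ and on the fact that $\pml \to \id$ uniformly on precompact subsets of $W_{k-1}$.
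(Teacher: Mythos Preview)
The paper does not prove this proposition; it is quoted from \cite{Spectrum} (the second author's original Floer spectrum paper) and used as a black box in the construction of $\SWF$. So there is no proof in the present paper to compare against.

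Your outline is nonetheless the correct one, and it matches the strategy in \cite{Spectrum}: argue by contradiction, extract a limit $\gamma_\infty$ via compactness, verify that $\gamma_\infty$ is a genuine finite-type Seiberg--Witten trajectory, and contradict the defining property of $R$. The delicate point, as you correctly flag at the end, is the bootstrapping needed to upgrade the uniform $W_k$-bound to strong $C^0_{\mathrm{loc}}(\rr; W_k)$-convergence. Your sketch of this step is plausible but thin: the projections $\pml$ are uniformly bounded on $W_{k-1}$ but not on higher Sobolev spaces, so one cannot bootstrap na\"ively through the three-dimensional flow equation. In \cite{Spectrum} this is handled by reinterpreting approximate trajectories as solutions of a perturbed four-dimensional Seiberg--Witten system on $Y \times \rr$ and applying elliptic regularity there, using that $\pml c \to c$ uniformly on the precompact set $c(\overline{B(2R)})$ to control the perturbation. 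Your purely three-dimensional bootstrapping would need additional work to be made rigorous, but the overall architecture of the argument is right.
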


\subsection{The Conley index and the Seiberg-Witten Floer spectrum}
The Seiberg-Witten Floer spectrum will be defined by means of the Conley index, an important construction from dynamical systems \cite{ConleyBook}.  We briefly recall the relevant definitions.  

Let $\{ \phi_t\}$ be a one-parameter family of diffeomorphisms of a smooth manifold $X$.  For a subset $A \subseteq X$, define    
$$
\Inv(A,\phi) = \{x \in A \mid \phi_t(x) \in A \text{ for all } t \in \mathbb{R} \}.  
$$    
Note that if $A$ is compact, so is $\Inv(A, \phi)$.  We say that a compact set $\is \subseteq X$ is an {\em isolated invariant set} if there is a compact set $A$ such that $\is = \Inv(A,\phi) \subset \inte(A)$.  We call $A$ an {\em isolating neighborhood} for $\is$.  The Conley index of an isolated invariant set is roughly a pair consisting of an isolating neighborhood and the set of points where the flow exits the isolating neighborhood.  We make this precise.  

\begin{definition}
A pair $(N,L)$ with $N,L$ compact subsets of $X$ is called an {\em index pair} for an isolated invariant set $\is$ if \\
a) $\Inv(N-L, \phi) = \is \subset \inte(N - L)$, \\
b) for all $x \in N$, if $\phi_t(x) \not \in N$ for some $t > 0$, then there exists $0 \leq \tau < t$ with $\phi_\tau(x) \in L$, and we call $L$ an {\em exit set} for $N$, \\
c) for $x \in L$ and $t> 0$, if $\phi_s(x) \in N$ for all $0 \leq s \leq t$, then $\phi_s(x) \in L$ for $0 \leq s \leq t$ and we say $L$ is {\em positively invariant} in $N$.   
\end{definition}

Conley proved that any isolated invariant set admits an index pair \cite{ConleyBook}.  We define the {\em Conley index} of an isolated invariant set $\is$ to be the pointed space $(N/L, [L])$ for an index pair $(N,L)$.  This is denoted $I(\phi, \is)$, and its pointed homotopy is independent of the choice of index pair (although it depends heavily on the choice of $\is$).    If $\phi$ is $G$-equivariant for a compact Lie group $G$ acting on $X$, then a $G$-invariant index pair can be constructed \cite{FloerConley, Pruszko}, thus yielding a $G$-equivariant Conley index, denoted $I_{G}(\is)$.    

With this in mind, we are ready to define the Seiberg-Witten Floer spectrum.  We fix $k$, $R$, and sufficiently large $\lambda$ such that Proposition~\ref{prop:proposition3} applies.  We consider the vector field $u^\lambda(l + \pml c)$ on $\vml$, where $u^\lambda$ is a smooth, $S^1$-invariant, cut-off function on $W^\lambda$ that vanishes outside of $B(3R)$.  This generates the flow $\phi^\lambda$ that we will work with.  Denote by $S^\lambda$ the union of all trajectories of $\phi^\lambda$ inside $B(R)$. Recall from Proposition~\ref{prop:proposition3} that these are the same as the trajectories that stay in $\overline{B(2R)}$. This implies that $S^\lambda$ is an isolated invariant set. 

Since everything is $S^1$-invariant, we can construct the equivariant Conley index $I^\lambda = I_{S^1}(\phi^\lambda,S^\lambda)$.  We must de-suspend appropriately to make the stable homotopy type independent of $\lambda$:
\[
\SWF(Y,\spinc,g) =  \Sigma^{-W^{(-\lambda,0)}} I^\lambda,
\]
where $W^{(-\lambda, 0)}$ denotes the direct sum of the eigenspaces of $l$ with eigenvalues in the interval $(-\lambda, 0)$. As we vary the metric $g$, the spectrum $\SWF(Y, \spinc, g)$ varies by suspending (or de-suspending) with copies of the vector space $\cc$. In \cite{Spectrum}, this indeterminacy is fixed by introducing a quantity $n(Y,\spinc, g) \in \qq$ (a linear combination of eta invariants), and setting  
$$\SWF(Y, \spinc) = \Sigma^{-n(Y, \spinc, g) \cc} \SWF(Y, \spinc, g),$$
where the de-suspension by rational numbers is defined formally. We have:

\begin{theorem}[Theorem 1 in \cite{Spectrum}]
The $S^1$-equivariant stable homotopy type of $\SWF(Y,\spinc)$ is an invariant of the pair $(Y, \spinc)$.  
\end{theorem}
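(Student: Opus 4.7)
The construction makes choices of metric $g$, flat connection $A_0$, Sobolev index $k$, cutoff function $u^\lambda$, radius $R$, and level $\lambda$, and my plan is to dispose of these in roughly that order of difficulty, with the metric being the genuine obstacle.

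First I would handle the auxiliary data that do not affect the invariant set or the ambient vector space. The Conley index depends only on the isolated invariant set, not the particular flow that isolates it: any two admissible choices of $R$ and of cutoff $u^\lambda$ produce the same $S^\lambda$ inside $B(R)$ (by Proposition~\ref{prop:proposition3}), and the flows can be joined by a continuous deformation through flows for which $S^\lambda$ is still the isolated invariant set, so the continuation property of the $S^1$-equivariant Conley index gives a canonical equivalence. Independence of the Sobolev level $k$ (for $k \geq 5$) uses that finite type trajectories are smooth, so the invariant set is literally the same subset of $\V^\lambda$ in every Sobolev completion. For $A_0$, any two flat $\spc$ connections differ by a closed imaginary $1$-form, which is exact since $b_1(Y)=0$; this difference is absorbed by a gauge transformation and translates the configuration space without changing the equivariant stable homotopy type.

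Next I would verify independence of $\lambda$, which is the technical heart. Given $\lambda < \lambda'$ and the same metric, decompose $\V^{\lambda'} = \V^\lambda \oplus \V^{(\lambda,\lambda')} \oplus \V^{(-\lambda',-\lambda)}$. On the two outer summands the flow $l + \pml c$ is essentially linear (up to the smoothing kernel $\pml$), with $\V^{(\lambda,\lambda')}$ unstable and $\V^{(-\lambda',-\lambda)}$ stable. A standard attractor–repeller calculation for the Conley index then yields
\[
I^{\lambda'} \;\simeq\; I^\lambda \wedge \bigl(\V^{(-\lambda',-\lambda)}\bigr)^+
\]
as pointed $S^1$-spaces. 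Since $\V^{(-\lambda',0)} = \V^{(-\lambda,0)} \oplus \V^{(-\lambda',-\lambda)}$, de-suspending by $\V^{(-\lambda',0)}$ on the left and $\V^{(-\lambda,0)}$ on the right gives the same $S^1$-equivariant spectrum.

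The main obstacle is metric dependence, where I would argue via continuation along a path $g_t$. For generic paths one may assume that no nonzero eigenvalue of the Dirac/$\ast d$ operator $l_t$ crosses $\pm\lambda$ at more than finitely many times, and never sits at $0$ (reducibles are ruled out because $b_1(Y)=0$ keeps the $1$-form part away from zero and the Dirac kernel is generic). Away from crossings, the invariant set $S^\lambda_{g_t}$ varies continuously inside $\V^\lambda_{g_t}$, and the Conley indices are canonically equivalent by continuation. At a crossing, say an eigenvalue passes upward through $+\lambda$, the finite-dimensional model $\V^\lambda_{g_t}$ loses a complex eigenspace; the attractor–repeller argument above identifies the effect on $I^\lambda$ with smashing by a copy of $\cc^+$, while $\V^{(-\lambda,0)}$ is unaffected, so the unstabilized spectrum $\Sigma^{-\V^{(-\lambda,0)}}I^\lambda$ changes by $\Sigma^{-\cc}$. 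Crossings at $-\lambda$ give the symmetric effect. The correction $n(Y,\spinc,g) \in \qq$, built from eta invariants of the Dirac and signature operators, is designed precisely so that its jump across each such crossing equals the number of complex suspensions needed to compensate; this is the content of the spectral-flow computation in \cite{Spectrum}. Therefore $\Sigma^{-n(Y,\spinc,g)\cc}\SWF(Y,\spinc,g)$ is unchanged along the path, and since the space of metrics is connected, this proves the desired invariance.
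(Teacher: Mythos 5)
First, a framing remark: the present paper does not prove this statement at all --- it is quoted as Theorem 1 of \cite{Spectrum} --- so the comparison is with the proof given there. Your overall architecture (disposing of the auxiliary choices, proving $\lambda$-independence via $I^{\lambda'} \simeq I^{\lambda} \wedge \bigl(W^{(-\lambda',-\lambda)}\bigr)^+$ together with the matching change in the de-suspension, then handling the metric by a continuation along a path $g_t$ with spectral-flow bookkeeping absorbed by $n(Y,\spinc,g)$) is indeed the architecture of that proof. But the metric-independence step, which you correctly identify as the heart, is wrong as written.

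The crossings that matter are not eigenvalues of $l_t$ crossing $\pm\lambda$ but Dirac eigenvalues crossing $0$. When an eigenvalue crosses $+\lambda$, the direction gained or lost is a \emph{stable} direction of the flow, so the Conley index is smashed with $S^0$, not with $\cc^+$ as you claim; when an eigenvalue crosses $-\lambda$, $I^\lambda$ changes by a $\cc$-suspension but $W^{(-\lambda,0)}$ changes by the same $\cc$, so $\Sigma^{-W^{(-\lambda,0)}}I^\lambda$ is unchanged --- this is exactly your own $\lambda$-independence argument run in a family, and it shows that $\pm\lambda$ crossings are neutral. Moreover $n(Y,\spinc,g)$ cannot ``jump across each such crossing'': $\lambda$ does not enter its definition (it is built from eta invariants of $g$ alone), so it only jumps when the Dirac operator acquires kernel, i.e.\ at a zero-crossing of a Dirac eigenvalue. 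That zero-crossing is precisely where the unnormalized spectrum jumps: along a generic path the invariant set remains isolated (one needs, and you should state, a boundedness/isolation result uniform in $t$ to run the continuation), and $W^{\lambda}_{g_t}$ does not change, so $I^\lambda$ continues unchanged, while $W^{(-\lambda,0)}$ gains or loses a copy of $\cc$; the jump of $n$, equal to the Dirac spectral flow, compensates exactly this. (Your parenthetical about reducibles is also off: the reducible is always present; what is generic is triviality of the Dirac kernel, and no zero-crossing occurs in the form part since $\ker(*d)\cap\ker d^*\cong H^1(Y;\R)=0$ for every metric.) As written, your argument assigns a nonexistent jump to $n$ and miscounts the effect of the $\pm\lambda$ crossings, so the central cancellation is not established. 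A smaller point: the Conley index is not an invariant of the isolated invariant set alone --- it depends on the flow --- so your treatment of the cutoff and radius must rest entirely on the continuation property (which you do invoke), not on the opening claim of that paragraph.
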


\section{Covering spaces}
\label{sec:proofcovers}

In this section we prove Theorems~\ref{thm:Smith} and~\ref{thm:HFred}, as well as Corollaries~\ref{cor:ZpL} and ~\ref{cor:coversL}. 

Suppose we are under the hypotheses of Theorem~\ref{thm:Smith}, with $Y$ and $\widetilde{Y}$ being rational homology spheres, and $\pi:\widetilde{Y} \to Y$ a $p^n$-sheeted regular covering, for $p$ prime. Let $\spinc$ be a $\spc$ structure on $Y$.

We equip $Y$ with a Riemannian metric $g$.  We use $\widetilde{g} = \pi^*g$ as our choice of metric on $\widetilde{Y}$.  Furthermore, we pull back the spinor bundle $\Spin$ from $Y$ to $\widetilde{Y}$.  We fix  a flat $\spc$ connection $A_0$ on $Y$ and consider its pullback $\widetilde{A}_0$ on $\widetilde{Y}$. In general, we will decorate an object with a tilde to mean the associated object for $\widetilde{Y}$.  Let $G$ denote the deck transformation group on $\widetilde{Y}$.  The main idea is that we can follow the constructions of the Seiberg-Witten Floer spectrum for $\widetilde{Y}$ such that it is a $G$-spectrum and at each step, the fixed points correspond exactly to the pull-backs of the corresponding objects on $Y$.  We can then use this to compare the Seiberg-Witten Floer spectra of $\widetilde{Y}$ and $Y$.    

Let $\widetilde{W}$ denote the Coulomb slice of the configuration space on $(\widetilde{Y},\pi^*\mathfrak{s})$.  The group $G$ acts on $\widetilde{W}$ by pull-back.  Furthermore, since the flat connection $A_0$ on $Y$ pulls back to the flat connection $\tilde{A}_0$ on $\widetilde{Y}$, we have that $\pi^*$ gives an inclusion of $W$ into $\widetilde{W}$ such that $W = \widetilde{W}^G$; being in Coulomb gauge is also preserved under pullbacks. We do point out that $\pi^*$ does not induce an isometric embedding from $W$ into $\widetilde{W}$.  This is because if $(a,\phi)$ has $L^2$ norm $1$ in $W$, then $(\pi^*(a),\pi^*(\phi))$ has $L^2$ norm $|G|$ in $\widetilde{W}$.  In particular, this shows that $B_W(R)$, the ball in $W$ of radius $R$, is precisely $B_{\widetilde{W}}(|G|\cdot R)^G$.  We then extend this to identify the Sobolev completion $W_{k}$ with $\widetilde{W}_{k}^G$.  Again, pullback dilates the $L^2_k$ norms by $|G|$.  

The linear Fredholm map $\widetilde{l}$ on $\widetilde{W}$ is $G$-equivariant. The map $\widetilde{l}$ can have more eigenvalues than $l$, but in any case any eigenvector of $l$ pulls back to an eigenvector of $\widetilde{l}$, for the same eigenvalue. Thus, for all $\lambda$, we have that $W^\lambda = (\widetilde{W}^\lambda)^G$.  We also have that $\widetilde{c}$ is $G$-equivariant. In particular, the pullback of a finite-type Seiberg-Witten trajectory on $Y$ in $W_{k}$ gives a $G$-invariant finite-type Seiberg-Witten trajectory on $\widetilde{Y}$ in $\widetilde{W}_{k}$. Similarly, the trajectories of $l + \pml c$ on $Y$ pull back to approximate Seiberg-Witten trajectories on $\widetilde{Y}$.   

Recall that for sufficiently large radius $R_Y$, all finite-type Seiberg-Witten trajectories on $Y$ are inside of $B_Y(R_Y)$.  By choosing each radius large enough, we can arrange that $R_Y = R_{\widetilde{Y}}/|G|$.   In particular, we have that 
$$(\widetilde{W}^\lambda \cap \overline{B_{\widetilde{Y}}(2R_{\widetilde{Y}}}))^G = W^\lambda \cap \overline{B_Y(2R_Y)}.$$  We choose $\lambda$ large enough such that Proposition~\ref{prop:proposition3} applies to each of $R_Y$ and $R_{\widetilde{Y}}$ on the respective manifold.  To obtain a well-defined flow on $W^\lambda$, we multiply $l + p^\lambda c$ by a bump function $\tilde{u}^\lambda$ on $B_{\widetilde{Y}}(3R_{\widetilde{Y}})$ which is radially symmetric in the $L^2_k$ norm.  Note that the $L^2_k$ norm on $\widetilde{W}_k$ is $G$-invariant by construction, and thus $\tilde{u}^\lambda$ is $G$-invariant.  We induce a corresponding bump function on $ B_{Y}(3R_{Y})$ by restriction. The truncated gradients induce the associated flows $\phi^\lambda$ and $\widetilde{\phi}^\lambda$ on $W^\lambda$ and $\widetilde{W}^\lambda$ respectively.  

Recall that the invariant set that we use for the Conley index on $\vml$ is $\is^\lambda$, the union of all finite-type trajectories of $\phi^\lambda$ in $W^\lambda \cap \overline{B(2R_Y)}$. On $\widetilde{W}^{\lambda}$ we have a similar isolated invariant set $\widetilde{\is}^\lambda$. Clearly, $(\widetilde{\is}^\lambda)^G = \is^\lambda$.  

The flow $\widetilde{\phi}^\lambda$ is not only $S^1$-equivariant, but also equivariant with respect to the action of $G$. Thus, we can choose an $(S^1 \times G)$-equivariant index pair $(N, L)$ for $\widetilde{\is}^\lambda$. The fixed point sets $(N^G,L^G)$ of the $G$-action form an $S^1$-invariant index pair for $\is^{\lambda}$. This implies that 
\begin{equation}
\label{eq:SmithConley}
I_{S^1 \times G}(\widetilde{\is}^\lambda, \widetilde{\phi}^\lambda)^G \simeq I_{S^1}(\is^\lambda,\phi^\lambda).
\end{equation}

To summarize, we can find based, compact $S^1$-spaces $$X=I_{S^1}(\is^\lambda,\phi^\lambda)$$ and $$\widetilde X=I_{S^1 \times G}(\widetilde{\is}^\lambda, \widetilde{\phi}^\lambda)$$ such that $\swf(Y, \s)$ and $\swf(\tY, \pi^* \s)$ are suitable (de-)suspensions of $X$ and $\widetilde X$, respectively; and moreover, $\widetilde X$ comes with an action of the group $G$ (commuting with the $S^1$-action), such that the $G$-fixed point set is $X$.

\begin{proof}[Proof of Theorem~\ref{thm:Smith}]
This follows by applying the classical Smith inequality \eqref{eq:smith} to the spaces $X$ and $\widetilde X$.  
\end{proof}

\begin{proof}[Proof of Theorem~\ref{thm:HFred}] All homologies below will be taken with coefficients in $\zz/p\zz$.

The variant $\HFred$ of Heegaard Floer homology \cite{HolDisk} can be described as the quotient $\HFminus/U^N \HFminus$ for $N \gg 0$. Using the Floer spectrum / monopole Floer / Heegaard Floer equivalences from \eqref{eq:Equiv} and \eqref{eq:Equiv2}, we see that $\HFred(Y, \s)$ is isomorphic (ignoring absolute gradings) to
$$ \tH_*^{S^1}(X) / \bigl( U^N \cdot \tH_*^{S^1}(X) \bigr), \ \ N  \gg 0.$$

Consider the long exact sequence in Borel homology associated to the pair $(X \wedge S(\cc^N)_+,$  $X \wedge D(\cc^N)_+)$. Observe that:
\begin{itemize}
\item The space $X \wedge S(\cc^N)_+$ has free $S^1$-action away from the basepoint, so its (reduced) Borel homology is isomorphic to the ordinary (reduced) homology of the quotient, $\tH_*(X \wedge_{S^1} S(\cc^N)_+)$;
\item The space $X \wedge D(\cc^N)_+$ is $S^1$-equivalent to $X$;
\item Smashing with $\bigl(D(\cc^N)_+/S(\cc^N)_+\bigr) \sim (\cc^N)^+$ preserves Borel homology (up to a degree shift by $2N$).
\end{itemize}
Thus, we can write the long exact sequence as:
$$ \dots \to \tH_*(X \wedge_{S^1} S(\cc^N)_+) \to  \tH_*^{S^1}(X) \to   \tH_{*-2N}^{S^1}(X) \to \dots$$
The map $\tH_*^{S^1}(X) \to   \tH_{*-2N}^{S^1}(X)$ in this sequence is induced from the composition
$$ X \hookrightarrow X \wedge D(\cc^N)_+ \to X \wedge \bigl(D(\cc^N)_+/S(\cc^N)_+\bigr) \to X \wedge (\cc^N)^+ \cong \Sigma^{n\cc}X.$$
Hence, on homology, the map is given by multiplication with the equivariant Euler class of $n\cc$, which is $U^N \in H^{2N}_{S^1}(\text{pt})$.

For $N$ large, multiplication by $U^N$ on $\tH_*^{S^1}(X)$ has kernel of dimension $N + \dim \HFred(Y, \s)$ and cokernel isomorphic to $\HFred(Y, \s)$. Therefore,
\begin{equation}
\label{eq:redX}
\dim \tH_*(X \wedge_{S^1} S(\cc^N)_+) = N + 2 \dim \HFred(Y, \s).
\end{equation}
Similar arguments apply to $\widetilde Y$ and $\widetilde X$, giving
\begin{equation}
\label{eq:redXtilde}
\dim \tH_*(\widetilde X \wedge_{S^1} S(\cc^N)_+) = N + 2 \dim \HFred(\widetilde Y, \s).
\end{equation}

Note that the $G$-fixed point set of $\widetilde X \wedge S(\cc^N)_+$ is $X \wedge S(\cc^N)_+$. Applying the classical Smith inequality to these spaces, together with \eqref{eq:redX} and \eqref{eq:redXtilde}, yields the desired inequality between the dimensions of $\HFred$ for $Y$ and $\widetilde Y$.  
\end{proof}

\begin{proof}[Proof of Corollary~\ref{cor:ZpL}] This follows from Theorem~\ref{thm:HFred}, using the characterization of $\zz/p\zz$-$L$-spaces in terms of the vanishing of $\HFred$ with $\zz/p\zz$ coefficients.
\end{proof}

\begin{proof}[Proof of Corollary~\ref{cor:coversL}] This is immediate from Corollary~\ref{cor:ZpL} and the universal coefficient theorem.
\end{proof}

\begin{remark}
Apart from Smith-type inequalities, the use of the Seiberg-Witten Floer spectrum allows us to define equivariant Seiberg-Witten Floer homologies for covering spaces. Indeed, given a regular cover $\pi : \widetilde Y \to Y$ between rational homology spheres, with any deck transformation group $G$, and equipped with a $G$-invariant $\spc$ structure $\s$, we set
$$  \mathit{SWFH}_*^{G}(\widetilde Y, \s) = \widetilde{H}_*^{G}(\swf(\widetilde Y, \s)), \ \ \ \ \mathit{SWFH}_*^{G \times S^1}(\widetilde Y, \s) = \widetilde{H}_*^{S^1 \times G}(\swf(\widetilde Y,\s)).$$
These invariants are modules over the rings $H^*(BG)$ and $H^*(B(G \times S^1)) = H^*(BG)[U]$, respectively.
\end{remark}

\section{Applications}
\label{sec:applications}

It is clear that when combined with the following, Corollary~\ref{cor:hatinequality} proves Theorem~\ref{thm:surgerysameK}.  

\begin{proposition}\label{prop:sameKinequality}
Let $K$ be a non-trivial knot in $S^3$ and let $p,q,p',q'$ be positive, relatively prime integers.  If $p/q \leq 1$ and $\left \lceil \fqp \right  \rceil < \left \lfloor \fqpprime \right \rfloor$, then for all primes $r$ and for all $\spinc \in \spc(S^3_{\fpq}(K))$ and $\spinc' \in \spc(S^3_{\fpqprime}(K))$, we have
\begin{equation}
\label{eq:sameK} \dim \HFhat(S^3_{ \fpq}(K),\spinc;\zz/r\zz) < \dim \HFhat(S^3_{ \fpqprime}(K),\spinc'; \zz/r\zz).
\end{equation}

\end{proposition}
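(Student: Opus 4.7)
I plan to apply the Ozsv\'ath--Szab\'o rational surgery formula \cite{RatSurg} to both sides of \eqref{eq:sameK} and compare the resulting mapping cones. For each $\spc$ structure $\spinc$ on $S^3_{\fpq}(K)$, the formula realizes $\HFhat(S^3_{\fpq}(K),\spinc;\mathbb{F})$ (with $\mathbb{F}=\zz/r\zz$) as the homology of a finite mapping cone $\widehat{\mathbb{A}}_\spinc \xrightarrow{\widehat{D}} \widehat{\mathbb{B}}_\spinc$ built from local subquotients $\widehat{A}_s,\widehat{B}$ of the hat-flavored knot Floer complex of $K$. Explicitly, $\widehat{\mathbb{A}}_\spinc=\bigoplus_n \widehat{A}_{s_n(\spinc)}$ is indexed by $s_n(\spinc)=\lfloor(\spinc+np)/q\rfloor$ for an integer lift $\spinc\in\{0,\dots,p-1\}$, and similarly for $\widehat{\mathbb{B}}$.

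The combinatorial input is that, for fixed $s\in\mathbb{Z}$, the multiplicity $m(\spinc,s;p,q):=\#\{n:s_n(\spinc)=s\}$ equals $\lfloor q/p\rfloor$ or $\lceil q/p\rceil$, since it counts integer points in an interval of length $q/p$. The hypothesis $\fpq\le 1$ thus ensures $\lfloor q/p\rfloor\ge 1$, so every ``nontrivial'' summand $\widehat{A}_s$ (one with $|s|\le g(K)-1$) appears with positive multiplicity in $\widehat{\mathbb{A}}_\spinc$. The homological input is that the cone admits an additive dimension formula of the form
\[
\dim\HFhat(S^3_{\fpq}(K),\spinc;\mathbb{F}) \;=\; 1+2\!\!\sum_{|s|\le g(K)-1}\!\! m(\spinc,s;p,q)\cdot\delta(s;K,\mathbb{F}),
\]
where $\delta(s;K,\mathbb{F})\ge 0$ is a local invariant of $\widehat{A}_s$ packaging both its reduced homology $h(s)$ and the $U$-shifts $V(s),H(s)$ of the structural maps $\widehat{v}_s,\widehat{h}_s\colon\widehat{A}_s\to\widehat{B}$. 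For nontrivial $K$ at least one $\delta(s)$ is positive: if $K$ is an $L$-space knot this holds because $V(0)=\tau(K)\ge 1$, and otherwise because $h(s)\ge 1$ for some $s$.

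Combining these, the bounds $m(\spinc,s;p,q)\le\lceil q/p\rceil<\lfloor q'/p'\rfloor\le m(\spinc',s;p',q')$ (the strict middle inequality being the hypothesis) give, after weighting by $\delta(s)\ge 0$ and using $\sum_s\delta(s)\ge 1$, the strict inequality $\sum_s m(\spinc,s;p,q)\delta(s)<\sum_s m(\spinc',s;p',q')\delta(s)$, which is equivalent to \eqref{eq:sameK}. The principal obstacle is justifying the additive dimension formula above: one must show that the cone differential $\widehat{D}$ contributes only a single ``tower'' dimension per $\spc$ structure and otherwise acts locally on each $\widehat{A}_s$ summand, without canceling contributions between distinct summands. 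This is a grading argument using the Alexander grading on the knot Floer complex and the explicit form of the $\widehat{v},\widehat{h}$ maps; the delicate point is packaging both sources of reduced contribution (intrinsic $h(s)$ and $U$-shifts $V(s),H(s)$) into a single invariant $\delta(s)$ that works uniformly, in particular in the $L$-space knot case where $h(s)=0$ but $V(s)>0$.
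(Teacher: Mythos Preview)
Your strategy coincides with the paper's: both apply the rational surgery formula, bound the multiplicities $m(\spinc,s;p,q)=\phi^{p/q}_{[i]}(s)$ between $\lfloor q/p\rfloor$ and $\lceil q/p\rceil$, and compare. The ``principal obstacle'' you flag is already a theorem of Jabuka \cite{Jabuka}, which the paper simply quotes, so there is nothing to re-derive. However, the precise formula you posit is not correct: a single nonnegative local invariant $\delta(s)$ with constant term $+1$ does not exist. In the range $0<p\le(2\nu-1)q$ (which contains all $p/q\le 1$ once $\nu>0$) Jabuka's formula has constant $-1$, namely
\[
\dim\HFhat=-1+2\sum_{|s|<\nu}\phi^{p/q}_{[i]}(s)+\sum_{s}\phi^{p/q}_{[i]}(s)\bigl(\dim H_*(\widehat A_s)-1\bigr);
\]
for an $L$-space knot this is $-1+2\sum_{|s|<g}\phi^{p/q}_{[i]}(s)$, which cannot be rewritten as $1+2\sum_s\phi^{p/q}_{[i]}(s)\,\delta(s)$ with $\delta(s)\ge 0$. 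This does not break the comparison (the constants match on the two sides once $K$ and the sign are fixed), but it means you must carry out Jabuka's short case split ($\nu=0$ versus $\nu>0$) rather than appeal to a uniform $\delta$.

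The paper also handles a point you omit: Jabuka's statement is only available after possibly replacing $K$ by its mirror to arrange $\nu(K)\ge 0$. Doing so turns both surgery coefficients negative, so one must prove the inequality for $-p/q$ versus $-p'/q'$ as well; the paper checks this separately using the $p<0$ branch of Jabuka's formula.
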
  

We will establish the desired inequality by applying the formula of Jabuka \cite{Jabuka} (based on that of Ozsv\'ath-Szab\'o in \cite{RatSurg}) for the Heegaard Floer homology of $p/q$-surgery on a knot $K$ in $S^3$.  We first recall his notation.  The formula will be expressed in terms of two objects: $H_*(\widehat{A}_s)$, defined in \cite{IntSurg}, which represent the Heegaard Floer homology of large surgeries on $K$ in certain $\spinc$ structures, and $\nu(K)$, a $\zz$-valued invariant defined in \cite{RatSurg}.  While Jabuka's results are stated with $\zz$-coefficients, the arguments also work for $\zz/r\zz$-coefficients for any prime $r$; we will omit the coefficients from the notation.  We do not need the definitions of either $\widehat{A}_s$ or $\nu$, just the following three facts: 
\begin{enumerate}[(i)]
\item either $\nu(K)$ or $\nu(-K)$ is non-negative (where $-K$ denotes the mirror of $K$), 
\item $\dim H_*(\widehat{A}_s) \geq 1$ for all $s$,
\item $\dim H_*(\widehat{A}_s) = 1$ for all $s$ implies that $\nu > 0$ for any non-trivial knot.
\end{enumerate}  

For $[i] \in \zz/p\zz$ and $s \in \zz$, let $\phi^{p/q}_{[i]}(s) = \# \{ n \in \zz \mid \lfloor \frac{i + p \cdot n}{q} \rfloor = s \}$.  Here, we also allow $p < 0$.  It is straightforward to verify that 
\begin{equation}\label{eq:phi-inequality}
\lfloor |q/p| \rfloor \leq \phi^{p/q}_{[i]}(s) \leq \lceil |q/p| \rceil.
\end{equation}
Further, let 
$$\mathcal{S}^{p/q}_{[i]} = \sum_{s \in \zz} \phi^{p/q}_{[i]}(s) \left( \dim H_*(\widehat{A}_s) - 1 \right).$$

\begin{theorem}[Jabuka \cite{Jabuka}]\label{thm:jabuka}  
Fix relatively prime integers $p,q$ with $q > 0$, and a knot $K$ in $S^3$. After possibly mirroring $K$, we can arrange that $\nu = \nu(K) \geq 0$ and if $\nu > 0$, 
$$
\dim \HFhat(S^3_{p/q}(K), [i]) = 
\begin{cases} 1 + \mathcal{S}^{p/q}_{[i]} &\text{ if } 0 < (2\nu - 1) q \leq p, \\ 
-1 + 2\sum_{|s| < \nu} \phi^{p/q}_{[i]}(s) + \mathcal{S}^{p/q}_{[i]} & \text{ if } 0 < p \leq (2\nu - 1)q,  \\
1 + 2\sum_{|s| < \nu} \phi^{p/q}_{[i]}(s) + \mathcal{S}^{p/q}_{[i]} & \text{ if } p < 0,
 \end{cases}
$$
while if $\nu = 0$, 
$$
\dim \HFhat(S^3_{p/q}(K), [i]) = 1 + \mathcal{S}^{p/q}_{[i]}.
$$
\end{theorem}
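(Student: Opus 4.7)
The strategy is to plug both $S^3_{p/q}(K)$ and $S^3_{p'/q'}(K)$ into Jabuka's formula (Theorem~\ref{thm:jabuka}) and show that the right-hand side strictly increases with the slope. The hypothesis $p/q \leq 1$ combined with $\lceil q/p \rceil < \lfloor q'/p' \rfloor$ forces $\lfloor q'/p' \rfloor \geq 2$, and hence $p'/q' \leq 1/2$; both slopes lie in $(0,1]$. By fact (i) we may, after possibly mirroring $K$ (which preserves $\dim\HFhat$ in each $\spc$ structure, via orientation reversal of $S^3_{p/q}(K)$), assume $\nu = \nu(K) \geq 0$. Denote the slope to be plugged into Jabuka by $\epsilon p/q$, where $\epsilon = \pm 1$ according to whether we mirrored; the key observation is that $\phi^{-p/q}_{[i]}(s) = \phi^{p/q}_{[i]}(s)$, via the substitution $n \mapsto -n$, so the combinatorial input to $\mathcal{S}^{\epsilon p/q}_{[i]}$ depends only on $|p|/q$.

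Suppose first $\nu > 0$, so $\nu \geq 1$. If $\epsilon = +1$, both $p,p' > 0$ and $p/q, p'/q' \leq 1 \leq 2\nu-1$, placing us in Jabuka's middle case $0 < p \leq (2\nu-1)q$ for both surgeries. If $\epsilon = -1$, both $\epsilon p, \epsilon p' < 0$, so both fall in Jabuka's third case. In either situation the same additive constant ($-1$ or $+1$) appears on both sides, and the difference becomes
\[
\dim \HFhat(S^3_{p'/q'}(K),[i']) - \dim \HFhat(S^3_{p/q}(K),[i]) = 2\sum_{|s|<\nu}\bigl(\phi^{p'/q'}_{[i']}(s) - \phi^{p/q}_{[i]}(s)\bigr) + \bigl(\mathcal{S}^{p'/q'}_{[i']} - \mathcal{S}^{p/q}_{[i]}\bigr).
\]
Each summand is non-negative by \eqref{eq:phi-inequality} and fact (ii), and for $s = 0$ (which lies in $|s|<\nu$ since $\nu \geq 1$) the first-sum contribution is at least $2(\lfloor q'/p' \rfloor - \lceil q/p \rceil) \geq 2 > 0$, giving the strict inequality.

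If instead $\nu = 0$, Jabuka's formula collapses to $\dim \HFhat = 1 + \mathcal{S}^{\epsilon p/q}_{[i]}$ on both sides, and the difference reduces to the $\mathcal{S}$-difference. Since $K$ is non-trivial, fact (iii) provides some $s_0$ with $\dim H_*(\widehat{A}_{s_0}) \geq 2$. The $s_0$-summand alone then contributes at least $1 \cdot (\lfloor q'/p' \rfloor - \lceil q/p \rceil) \geq 1 > 0$, yielding the strict inequality in this case as well.

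The only step requiring any real care is verifying that both surgeries land in the same branch of Jabuka's trichotomy, so that the additive constant cancels in the difference; this is what the hypothesis $p/q \leq 1$ ensures. Once that is in place the argument is a direct comparison using \eqref{eq:phi-inequality} and fact (ii), with strict positivity supplied either by the $s=0$ term (when $\nu > 0$) or by fact (iii) (when $\nu = 0$).
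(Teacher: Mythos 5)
Your argument does not prove the statement it is supposed to prove. The statement in question is Theorem~\ref{thm:jabuka}, Jabuka's surgery formula computing $\dim \HFhat(S^3_{p/q}(K),[i])$ in terms of the groups $H_*(\widehat{A}_s)$, the invariant $\nu$, and the counting functions $\phi^{p/q}_{[i]}(s)$. In the paper this is an imported result, quoted from \cite{Jabuka} without proof and ultimately resting on the Ozsv\'ath--Szab\'o rational surgery mapping cone formula of \cite{RatSurg}. Your proposal instead \emph{assumes} this formula (``plug both $S^3_{p/q}(K)$ and $S^3_{p'/q'}(K)$ into Jabuka's formula'') and deduces a strict inequality between the dimensions of $\HFhat$ for two different surgery slopes. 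That deduction is Proposition~\ref{prop:sameKinequality}, not Theorem~\ref{thm:jabuka}, and as a proof of the theorem itself it is circular: nothing in what you wrote explains why $\dim \HFhat(S^3_{p/q}(K),[i])$ is given by the displayed case formula in the first place. An actual proof would have to identify $\HFhat(S^3_{p/q}(K),[i])$ with the homology of the truncated mapping cone built from copies of the complexes $\widehat{A}_s$ and $\widehat{B}_s$ (with $\phi^{p/q}_{[i]}(s)$ arising as the number of copies of $\widehat{A}_s$ in the $[i]$-summand), and then analyze the surjectivity and injectivity of the maps $\widehat{v}_s$ and $\widehat{h}_s$ in terms of $\nu$ to extract the three cases and the correction terms $\pm 1$ and $2\sum_{|s|<\nu}\phi^{p/q}_{[i]}(s)$. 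None of that appears in your write-up.

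For what it is worth, read as a proof of Proposition~\ref{prop:sameKinequality}, your argument is essentially the one the paper gives: the same reduction via mirroring, the same use of \eqref{eq:phi-inequality} to compare $\phi^{p/q}_{[i]}(s)$ with $\phi^{p'/q'}_{[i']}(s)$, the same invocation of fact (iii) when $\nu=0$, and the same observation that $p/q \leq 1$ forces both positive surgeries into the middle branch of the trichotomy (the paper handles the boundary subcase $0 < (2\nu-1)q \leq p$ by the extra inequality $1 \leq -1 + 2\sum_{|s|<\nu}\phi^{p/q}_{[i]}(s)$, which your ``both land in the middle case'' claim subsumes since the two formulas agree when $(2\nu-1)q = p$). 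But that is a correct proof of the wrong statement; the theorem you were asked to prove remains unaddressed.
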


\begin{proof}[Proof of Proposition~\ref{prop:sameKinequality}]
To prove the proposition, it suffices to mirror the knot $K$ so as to be in the setting of Theorem~\ref{thm:jabuka}, provided that we additionally prove the inequality
\begin{equation}\label{eq:sameK-minus}
\dim \HFhat(S^3_{- \fpq}(K),\spinc;\zz/r\zz) < \dim \HFhat(S^3_{- \fpqprime}(K),\spinc'; \zz/r\zz).
\end{equation}
From now on, we assume the formulas in Theorem~\ref{thm:jabuka} hold for the knot $K$.  

We begin with the following observation.  If $\lceil q/p \rceil < \lfloor q'/p' \rfloor$, we have that $\phi^{p/q}_{[i]}(s) < \phi^{p'/q'}_{[i']}(s)$ and $\phi^{-p/q}_{[i]}(s) < \phi^{-p'/q'}_{[i']}(s)$ for all $[i], [i'], s$ by \eqref{eq:phi-inequality}.

We first consider the case that $\nu = 0$.  As discussed, this implies that $\dim H_*(\widehat{A}_s) > 1$ for some $s$.  Since $\phi^{p/q}_{[i]}(s) < \phi^{p'/q'}_{[i']}(s)$ for all $[i], [i'], s$, we see that $\mathcal{S}^{p/q}_{[i]} < \mathcal{S}^{p'/q'}_{[i']}$ for all $[i], [i']$.     Theorem~\ref{thm:jabuka} now establishes \eqref{eq:sameK}.  The same argument applies to show that $\mathcal{S}^{-p/q}_{[i]} < \mathcal{S}^{-p'/q'}_{[i']}$, and hence we obtain \eqref{eq:sameK-minus}. 

Next, consider the case that $\nu > 0$.  We first analyze the positive surgeries ($p/q, p'/q'$).  Since $\phi^{p/q}_{[i]}(s) < \phi^{p'/q'}_{[i']}(s)$ for all $[i], [i'], s$, we have that $\mathcal{S}^{p/q}_{[i]} \leq \mathcal{S}^{p'/q'}_{[i']}$ for all $[i], [i']$.  Observe that we cannot have $(2\nu - 1) q' \leq p'$ since $\lfloor q'/p' \rfloor > \lceil q/p \rceil$ by assumption and hence $\lfloor q'/p' \rfloor \geq 2$.  Therefore, by Theorem~\ref{thm:jabuka}, in order to prove \eqref{eq:sameK}, it suffices to prove the inequalities 
$$
1 \leq -1 + 2 \sum_{|s| < \nu} \phi^{p/q}_{[i]}(s) < -1 + 2\sum_{|s| < \nu} \phi^{p'/q'}_{[i']}(s),
$$  
for any $[i], [i']$.  These follow from \eqref{eq:phi-inequality}, since $p/q \leq 1$ and $\nu > 0$.  

Now, we consider the case of negative surgeries when $\nu > 0$.  As before, we have that $\mathcal{S}^{-p/q}_{[i]} \leq \mathcal{S}^{-p'/q'}_{[i']}$ for all $[i], [i']$.  By Theorem~\ref{thm:jabuka} it suffices to establish the inequality 
$$
1 + 2 \sum_{|s| < \nu} \phi^{-p/q}_{[i]}(s) < 1 + 2\sum_{|s| < \nu} \phi^{-p'/q'}_{[i']}(s),
$$   
for any $[i], [i']$, which again follows from \eqref{eq:phi-inequality}, since $p/q \leq 1$ and $\nu > 0$. 
\end{proof}

\begin{remark}
Note that if $K$ is hyperbolic, a variant of Theorem~\ref{thm:surgerysameK} can be obtained for generic $p, q, p', q'$ via classical methods as follows.  The following argument was shown to us by John Luecke.  First, fix $\fpq \in \mathbb{Q}$ (not necessarily between 0 and 1).  For generic $\fpqprime$, we will have that $S^3_{\fpqprime}(K)$ is hyperbolic by Thurston's hyperbolic Dehn surgery theorem.  Thus, if $S^3_{\fpq}(K)$ is not hyperbolic, then it cannot cover $S^3_{\fpqprime}$ when the latter is hyperbolic.  If instead, $S^3_{\fpq}(K)$ is hyperbolic, then we have $vol(S^3_{\fpq}(K)) < vol(K)$.  Further, for fixed $\epsilon > 0$, for $p'$ and $q'$ large enough, we have $vol(S^3_{\fpqprime}(K)) \geq vol(K) - \epsilon$; see \cite[Theorem 1A]{NeumannZagier} for explicit bounds in terms of $p',q'$.  In particular, we have $vol(S^3_{\fpq}(K)) < vol(S^3_{\fpqprime}(K))$.  Recall that for hyperbolic manifolds, if $\widetilde{Y}$ covers $Y$, then $vol(\widetilde{Y}) \geq vol(Y)$.  Thus, we have that $S^3_{\fpq}(K)$ cannot cover $S^3_{\fpqprime}(K)$.  

Alternatively, we could fix $\fpqprime$, and allow $\fpq$ to vary.  Fix $\delta > 0$.  For generic $\fpq$, we have that the length of the shortest geodesic in $S^3_{\fpq}(K)$ is at most $\delta$ \cite[Proposition 4.3]{NeumannZagier}.  We thus choose $\fpq$ such that the length of the shortest geodesic in $S^3_{\fpq}(K)$ is less than the length of the shortest geodesic in $S^3_{\fpqprime}(K)$.  We see that in this case $S^3_{\fpq}(K)$ cannot cover $S^3_{\fpqprime}(K)$, since if $\gamma$ was the shortest geodesic, its projection to $S^3_{\fpqprime}(K)$ determines a geodesic in $S^3_{\fpqprime}(K)$ with the same length.  Thus, we would have that the length of the shortest geodesic in $S^3_{\fpqprime}(K)$ is at most that of $S^3_{\fpq}(K)$, which is a contradiction.              

Note that in either setting, we did not need any assumptions on the type of covering.  
\end{remark}

Using a similar argument to the one for Theorem~\ref{thm:surgerysameK}, we can also prove Theorem~\ref{thm:surgerylspaceK}.

\begin{proof}[Proof of Theorem~\ref{thm:surgerylspaceK}]
If $p/q \geq 2g(K) - 1$, then $S^3_{p/q}(K)$ is a $\zz/r\zz$-$L$-space by \cite{Hom}.  Since $K$ is non-trivial, we have
$$
1 \leq 2g(K) - 1 = (2g(K) - 1) \lceil q/p \rceil < (2g(K') - 1) \lfloor q'/p' \rfloor < (2g(K') - 1) q'/p', 
$$
and thus $p'/q' < 2g(K') - 1$.  Therefore, $S^3_{p'/q'}(K')$ is not a $\zz/r\zz$-$L$-space by \cite{KMOS}.  The result now follows from Corollary~\ref{cor:ZpL}.  Therefore, we now assume that $0 <p/q < 2g(K) - 1$.  

It is well-known that for a non-trivial $\zz/r\zz$-$L$-space knot, $\dim H_*(\widehat{A}_s) = 1$ for all $s$.  In this case, $\nu = g(K)$ \cite[Proposition 9.7]{RatSurg}.  Therefore, since $K$ and $K'$ are $L$-space knots, we have $\mathcal{S}^{p/q}_{[i]} = \mathcal{S}^{p'/q'}_{[i']} = 0$ for both $K$ and $K'$.  Therefore, by Theorem~\ref{thm:jabuka}, to establish the result it suffices to show that 
$$\sum_{|s| < g(K) } \phi^{p/q}_{[i]}(s) < \sum_{|s| < g(K')} \phi^{p'/q'}_{[i']}(s).$$  By our assumptions on $p/q, p'/q', g(K), g(K')$, and by \eqref{eq:phi-inequality}, we have 
$$ \sum_{|s| <g(K)} \phi^{p/q}_{[i]}(s) \leq (2g(K) - 1)\lceil q/p \rceil  < (2g(K') - 1) \lfloor q'/p' \rfloor  \leq \sum_{|s| < g(K')} \phi^{p'/q'}_{[i']}(s).$$
\end{proof}

We end with another application of the same techniques.

\begin{corollary} 
Let $K$ be any alternating or Montesinos knot other than the pretzel knots $\pm P(-2,3,2s+1)$ for any positive $s \geq 3$ or a torus knot.  Then, for any $p'/q' \geq 9$, $S^3_{p'/q'}(P(-2,3,7))$ is not an $r^n$-sheeted regular cover of $S^3_{p/q}(K)$ for any $p/q \in \mathbb{Q}$ and prime $r$. 
\end{corollary}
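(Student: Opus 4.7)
The plan is to apply Corollary~\ref{cor:ZpL} together with the classification of $L$-space knots in the families of alternating and Montesinos knots. The key inputs about the covering manifold are that the pretzel $P(-2,3,7)$ is a positive $L$-space knot (as it admits lens space surgeries by Fintushel--Stern), and that its Seifert genus equals $5$. By the standard result that every surgery on an $L$-space knot $K$ with slope $\geq 2g(K)-1$ yields an $L$-space, the hypothesis $p'/q' \geq 9$ guarantees that $S^3_{p'/q'}(P(-2,3,7))$ is an $L$-space, and hence a $\zz/r\zz$-$L$-space for every prime $r$.

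Now suppose, for contradiction, that $S^3_{p'/q'}(P(-2,3,7))$ is an $r^n$-sheeted regular cover of $S^3_{p/q}(K)$. By Corollary~\ref{cor:ZpL}, the base $S^3_{p/q}(K)$ is itself a $\zz/r\zz$-$L$-space; in particular, it is a rational homology sphere. Since $S^3_0(K)$ has $b_1=1$, this forces $p/q \neq 0$. Replacing $K$ by its mirror if necessary -- an operation that preserves both the alternating/Montesinos property and the class of excluded knots -- we may assume $p/q > 0$. Running the Ozsv\'ath--Szab\'o rational surgery formula with $\zz/r\zz$-coefficients then shows, exactly as in the integral case, that the subcomplexes $\widehat{A}_s$ must all have one-dimensional homology, so $K$ is itself a $\zz/r\zz$-$L$-space knot.

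At this stage I would invoke the classification of $L$-space knots in our two families: for alternating knots, Ozsv\'ath--Szab\'o proved the only examples are the torus knots $T(2,2n+1)$; for Montesinos knots, the work of Lidman--Moore adds only the pretzels $\pm P(-2,3,2s+1)$ with $s\geq 3$. These are precisely the knots excluded by hypothesis on $K$, yielding the desired contradiction. The main obstacle in this approach is confirming that both classification theorems remain valid for $\zz/r\zz$-$L$-space knots, rather than just for $\zz$-coefficient $L$-space knots. Their proofs rely on constraints on the Alexander polynomial (alternating $\pm 1$ coefficients) and on fiberedness detected by knot Floer homology, both of which survive the passage to any field of coefficients; so the extension should be routine but requires careful verification, and is the step deserving the most attention in writing up the proof.
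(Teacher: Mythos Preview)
Your argument is correct and follows essentially the same route as the paper: show that $S^3_{p'/q'}(P(-2,3,7))$ is an $L$-space for $p'/q'\geq 9$, invoke Corollary~\ref{cor:ZpL} to force the base to be a $\zz/r\zz$-$L$-space, and then appeal to the classification of $L$-space knots among alternating and Montesinos knots (the paper cites Baker--Moore together with Lidman--Moore for the Montesinos case). Your flagged concern about extending the classification to $\zz/r\zz$-coefficients is exactly the point the paper isolates in a footnote, and it is resolved there just as you anticipate.
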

\begin{proof}
By \cite{BakerMoore, LidmanMoore}, the conditions on $K$ guarantee that $S^3_{p/q}(K)$ is not a $\zz/r\zz$-$L$-space-knot.\footnote{The arguments in \cite{BakerMoore} for non-pretzel Montesinos knots work for any coefficients.  The arguments in \cite{LidmanMoore} for pretzel knots are given over $\zz/2\zz$, because for one knot, the knot Floer homology needs to be computed over $\zz/2\zz$.  However, one can deduce from the calculation using universal coefficients and \cite{OSLens} that that knot cannot be a $\zz/r\zz$-$L$-space knot for any $r$.}  On the other hand, $S^3_{p'/q'}(P(-2,3,7))$ is an $L$-space for $p'/q' \geq 9$.  Now apply Theorem~\ref{thm:surgerylspaceK}. 
\end{proof}

\bibliography{biblio}

\begin{thebibliography}{HRRW15}

\bibitem[Ago13]{Agol}
Ian Agol.
\newblock The virtual {H}aken conjecture.
\newblock {\em Doc. Math.}, 18:1045--1087, 2013.
\newblock With an appendix by Agol, Daniel Groves, and Jason Manning.

\bibitem[BC17]{BoyerClay}
Steven Boyer and Adam Clay.
\newblock Foliations, orders, representations, {L}-spaces and graph manifolds.
\newblock {\em Adv. Math.}, 310:159--234, 2017.

\bibitem[BGW13]{BoyerGordonWatson}
Steven Boyer, Cameron~McA. Gordon, and Liam Watson.
\newblock On {L}-spaces and left-orderable fundamental groups.
\newblock {\em Math. Ann.}, 356(4):1213--1245, 2013.

\bibitem[Blo11]{Bloom}
Jonathan~M. Bloom.
\newblock A link surgery spectral sequence in monopole {F}loer homology.
\newblock {\em Adv. Math.}, 226(4):3216--3281, 2011.

\bibitem[BM14]{BakerMoore}
Kenneth~L.. Baker and Allison~H. Moore.
\newblock {Montesinos knots, Hopf plumbings, and L-space surgeries}.
\newblock Preprint, \url{arXiv:1404.7585}, 2014.

\bibitem[Bre72]{BredonBook}
Glen~E. Bredon.
\newblock {\em Introduction to compact transformation groups}.
\newblock Academic Press, New York, 1972.
\newblock {P}ure and Applied Mathematics, Vol. 46.

\bibitem[BRW05]{BoyerRolfsenWiest}
Steven Boyer, Dale Rolfsen, and Bert Wiest.
\newblock Orderable 3-manifold groups.
\newblock {\em Ann. Inst. Fourier (Grenoble)}, 55(1):243--288, 2005.

\bibitem[CGH12a]{CGH3}
Vincent Colin, Paolo Ghiggini, and Ko~Honda.
\newblock The equivalence of {H}eegaard {F}loer homology and embedded contact
  homology {III}: from hat to plus.
\newblock Preprint, \url{arXiv:1208.1526}, 2012.

\bibitem[CGH12b]{CGH1}
Vincent Colin, Paolo Ghiggini, and Ko~Honda.
\newblock The equivalence of {H}eegaard {F}loer homology and embedded contact
  homology via open book decompositions {I}.
\newblock Preprint, \url{arXiv:1208.1074}, 2012.

\bibitem[CGH12c]{CGH2}
Vincent Colin, Paolo Ghiggini, and Ko~Honda.
\newblock The equivalence of {H}eegaard {F}loer homology and embedded contact
  homology via open book decompositions {II}.
\newblock Preprint, \url{arXiv:1208.1077}, 2012.

\bibitem[Con78]{ConleyBook}
Charles Conley.
\newblock {\em Isolated invariant sets and the {M}orse index}, volume~38 of
  {\em CBMS Regional Conference Series in Mathematics}.
\newblock American Mathematical Society, Providence, R.I., 1978.

\bibitem[Flo52]{Floyd}
E.~E. Floyd.
\newblock On periodic maps and the {E}uler characteristics of associated
  spaces.
\newblock {\em Trans. Amer. Math. Soc.}, 72:138--147, 1952.

\bibitem[Flo87]{FloerConley}
Andreas Floer.
\newblock A refinement of the {C}onley index and an application to the
  stability of hyperbolic invariant sets.
\newblock {\em Ergodic Theory Dynam. Systems}, 7(1):93--103, 1987.

\bibitem[Gor91]{GordonICM}
Cameron~McA. Gordon.
\newblock Dehn surgery on knots.
\newblock In {\em Proceedings of the {I}nternational {C}ongress of
  {M}athematicians, {V}ol.\ {I}, {II} ({K}yoto, 1990)}, pages 631--642. Math.
  Soc. Japan, Tokyo, 1991.

\bibitem[Hen12]{Hendricks}
Kristen Hendricks.
\newblock A rank inequality for the knot {F}loer homology of double branched
  covers.
\newblock {\em Algebr. Geom. Topol.}, 12(4):2127--2178, 2012.

\bibitem[Hom11]{Hom}
Jennifer Hom.
\newblock A note on cabling and {$L$}-space surgeries.
\newblock {\em Algebr. Geom. Topol.}, 11(1):219--223, 2011.

\bibitem[HRRW15]{HRRW}
Jonathan Hanselman, Jacob Rasmussen, Sarah~D. Rasmussen, and Liam Watson.
\newblock Taut foliations on graph manifolds.
\newblock Preprint, \url{arXiv:1508.05911}, 2015.

\bibitem[Jab15]{Jabuka}
S.~Jabuka.
\newblock Heegaard {F}loer groups of {D}ehn surgeries.
\newblock {\em J. Lond. Math. Soc. (2)}, 92(3):499--519, 2015.

\bibitem[JM08]{JabukaMark}
Stanislav Jabuka and Thomas~E. Mark.
\newblock On the {H}eegaard {F}loer homology of a surface times a circle.
\newblock {\em Adv. Math.}, 218(3):728--761, 2008.

\bibitem[KLT10a]{KLT1}
Ca\u{g}atay Kutluhan, Yi-Jen Lee, and Clifford~H. Taubes.
\newblock {HF=HM I : H}eegaard {F}loer homology and {S}eiberg--{W}itten {F}loer
  homology.
\newblock Preprint, \url{arXiv:1007.1979}, 2010.

\bibitem[KLT10b]{KLT2}
Ca\u{g}atay Kutluhan, Yi-Jen Lee, and Clifford~H. Taubes.
\newblock {HF=HM II : R}eeb orbits and holomorphic curves for the
  ech/{H}eegaard-{F}loer correspondence.
\newblock Preprint, \url{arXiv:1008.1595}, 2010.

\bibitem[KLT10c]{KLT3}
Ca\u{g}atay Kutluhan, Yi-Jen Lee, and Clifford~H. Taubes.
\newblock {HF=HM III : H}olomorphic curves and the differential for the
  ech/{H}eegaard {F}loer correspondence.
\newblock Preprint, \url{arXiv:1010.3456}, 2010.

\bibitem[KLT11]{KLT4}
Ca\u{g}atay Kutluhan, Yi-Jen Lee, and Clifford~H. Taubes.
\newblock {HF=HM IV : T}he {S}eiberg-{W}itten {F}loer homology and ech
  correspondence.
\newblock Preprint, \url{arXiv:1107.2297}, 2011.

\bibitem[KLT12]{KLT5}
Ca\u{g}atay Kutluhan, Yi-Jen Lee, and Clifford~H. Taubes.
\newblock {HF=HM V} : {S}eiberg-{W}itten {F}loer homology and handle additions.
\newblock Preprint, \url{arXiv:1204.0115}, 2012.

\bibitem[KM07]{KMbook}
Peter~B. Kronheimer and Tomasz~S. Mrowka.
\newblock {\em Monopoles and three-manifolds}, volume~10 of {\em New
  Mathematical Monographs}.
\newblock Cambridge University Press, Cambridge, 2007.

\bibitem[KMOS07]{KMOS}
Peter~B. Kronheimer, Tomasz~S. Mrowka, Peter~S. Ozsv{\'a}th, and Zoltan
  Szab{\'o}.
\newblock Monopoles and lens space surgeries.
\newblock {\em Ann. of Math. (2)}, 165(2):457--546, 2007.

\bibitem[Lee05]{Lee}
Yi-Jen Lee.
\newblock Heegaard splittings and {S}eiberg-{W}itten monopoles.
\newblock In {\em Geometry and topology of manifolds}, volume~47 of {\em Fields
  Inst. Commun.}, pages 173--202. Amer. Math. Soc., Providence, RI, 2005.

\bibitem[LL08]{LeeLipshitz}
Dan~A. Lee and Robert Lipshitz.
\newblock Covering spaces and {$\mathbb Q$}-gradings on {H}eegaard {F}loer
  homology.
\newblock {\em J. Symplectic Geom.}, 6(1):33--59, 2008.

\bibitem[LM16a]{Equivalence}
Tye Lidman and Ciprian Manolescu.
\newblock The equivalence of two {S}eiberg-{W}itten {F}loer homologies.
\newblock Preprint, \url{arXiv:1603.00582}, 2016.

\bibitem[LM16b]{LidmanMoore}
Tye Lidman and Allison~H. Moore.
\newblock Pretzel knots with {$L$}-space surgeries.
\newblock {\em Michigan Math. J.}, 65(1):105--130, 2016.

\bibitem[LS07]{LiscaStipsicz3}
Paolo Lisca and Andr{\'a}s~I. Stipsicz.
\newblock Ozsv\'ath-{S}zab\'o invariants and tight contact 3-manifolds. {III}.
\newblock {\em J. Symplectic Geom.}, 5(4):357--384, 2007.

\bibitem[LT16]{LipshitzTreumann}
Robert Lipshitz and David Treumann.
\newblock Noncommutative {H}odge-to-de {R}ham spectral sequence and the
  {H}eegaard {F}loer homology of double covers.
\newblock {\em J. Eur. Math. Soc. (JEMS)}, 18(2):281--325, 2016.

\bibitem[Man03]{Spectrum}
Ciprian Manolescu.
\newblock Seiberg-{W}itten-{F}loer stable homotopy type of three-manifolds with
  {$b_1=0$}.
\newblock {\em Geom. Topol.}, 7:889--932 (electronic), 2003.

\bibitem[Mos71]{Moser}
Louise Moser.
\newblock Elementary surgery along a torus knot.
\newblock {\em Pacific J. Math.}, 38:737--745, 1971.

\bibitem[NW15]{NiWu}
Yi~Ni and Zhongtao Wu.
\newblock Cosmetic surgeries on knots in {$S^3$}.
\newblock {\em J. Reine Angew. Math.}, 706:1--17, 2015.

\bibitem[NZ85]{NeumannZagier}
Walter~D. Neumann and Don Zagier.
\newblock Volumes of hyperbolic three-manifolds.
\newblock {\em Topology}, 24(3):307--332, 1985.

\bibitem[OS04a]{GenusBounds}
Peter~S. Ozsv{\'a}th and Zolt{\'a}n Szab{\'o}.
\newblock Holomorphic disks and genus bounds.
\newblock {\em Geom. Topol.}, 8:311--334, 2004.

\bibitem[OS04b]{HolDiskTwo}
Peter~S. Ozsv{\'a}th and Zolt{\'a}n Szab{\'o}.
\newblock Holomorphic disks and three-manifold invariants: properties and
  applications.
\newblock {\em Ann. of Math. (2)}, 159(3):1159--1245, 2004.

\bibitem[OS04c]{HolDisk}
Peter~S. Ozsv{\'a}th and Zolt{\'a}n Szab{\'o}.
\newblock Holomorphic disks and topological invariants for
  closedthree-manifolds.
\newblock {\em Ann. of Math. (2)}, 159(3):1027--1158, 2004.

\bibitem[OS05a]{OSLens}
Peter~S. Ozsv{\'a}th and Zolt{\'a}n Szab{\'o}.
\newblock On knot {F}loer homology and lens space surgeries.
\newblock {\em Topology}, 44(6):1281--1300, 2005.

\bibitem[OS05b]{BrDCov}
Peter~S. Ozsv{\'a}th and Zolt{\'a}n Szab{\'o}.
\newblock On the {H}eegaard {F}loer homology of branched double-covers.
\newblock {\em Adv. Math.}, 194(1):1--33, 2005.

\bibitem[OS08]{IntSurg}
Peter~S. Ozsv{\'a}th and Zolt{\'a}n Szab{\'o}.
\newblock Knot {F}loer homology and integer surgeries.
\newblock {\em Algebr. Geom. Topol.}, 8(1):101--153, 2008.

\bibitem[OS11]{RatSurg}
Peter~S. Ozsv{\'a}th and Zolt{\'a}n Szab{\'o}.
\newblock Knot {F}loer homology and rational surgeries.
\newblock {\em Algebr. Geom. Topol.}, 11(1):1--68, 2011.

\bibitem[Pet09]{Peters}
Thomas Peters.
\newblock On {L}-spaces and non left-orderable 3-manifold groups.
\newblock Preprint, \url{arXiv:0903.4495}, 2009.

\bibitem[Pru99]{Pruszko}
Artur~M. Pruszko.
\newblock The {C}onley index for flows preserving generalized symmetries.
\newblock In {\em Conley index theory ({W}arsaw, 1997)}, volume~47 of {\em
  Banach Center Publ.}, pages 193--217. Polish Acad. Sci., Warsaw, 1999.

\bibitem[Smi38]{SmithInequality}
P.~A. Smith.
\newblock Transformations of finite period.
\newblock {\em Ann. of Math. (2)}, 39(1):127--164, 1938.

\bibitem[SS10]{SeidelSmithLoc}
Paul Seidel and Ivan Smith.
\newblock Localization for involutions in {F}loer cohomology.
\newblock {\em Geom. Funct. Anal.}, 20(6):1464--1501, 2010.

\bibitem[Tau10]{Taubes12345}
Clifford~Henry Taubes.
\newblock Embedded contact homology and {S}eiberg-{W}itten {F}loer cohomology
  {I-V}.
\newblock {\em Geom. Topol.}, 14(5):2497--3000, 2010.

\bibitem[Wis11]{Wise}
Daniel~T. Wise.
\newblock The structure of groups with a quasiconvex hierarchy.
\newblock Preprint, available at
  \url{http://www.math.mcgill.ca/wise/papers.html}, 2011.

\end{thebibliography}
\bibliographystyle{alpha}

\end{document}